\definecolor{color1}{HTML}{269900}
\definecolor{color2}{rgb}{.20,.60,.22}
\definecolor{color3}{rgb}{0,.40,.80}
\newcommand{\nocontentsline}[3]{}
\newcommand{\tocless}[2]{\bgroup\let\addcontentsline=\nocontentsline#1{#2}\egroup}
\newtheorem{lemma}{Lemma}[section]
\newtheorem{proposition}[lemma]{Proposition}
\newtheorem{theorem}[lemma]{Theorem}
\newtheorem{corollary}[lemma]{Corollary}
\newtheorem*{theoremA}{Theorem}
\theoremstyle{definition}
\newtheorem{definition}[lemma]{Definition}
\newtheorem{remark}[lemma]{Remark}
\newtheorem{notation}[lemma]{Notation}
\newcommand{\mfk}[1]{\mathfrak{#1}}
\newcommand{\mbb}[1]{\mathbb{#1}}
\newcommand{\mcl}[1]{\mathcal{#1}}
\newcommand{\msc}[1]{\mathscr{#1}}
\DeclareMathOperator{\Hom}{Hom}
\DeclareMathOperator{\End}{End}
\DeclareMathOperator{\RHom}{RHom}
\DeclareMathOperator{\REnd}{REnd}
\DeclareMathOperator{\Ext}{Ext}
\DeclareMathOperator{\rep}{rep}
\DeclareMathOperator{\Rep}{Rep}
\DeclareMathOperator{\Spec}{Spec}
\DeclareMathOperator{\Kdim}{Kdim}
\DeclareMathOperator{\Spf}{Spf}
\DeclareMathOperator{\res}{res}
\DeclareMathOperator{\Coh}{Coh}
\DeclareMathOperator{\Sym}{Sym}
\newcommand{\RG}{\mathrm{R}G}
\newcommand{\ot}{\otimes}
\newcommand{\K}{\mathcal{K}}
\renewcommand{\1}{\mathbf{1}}
\renewcommand{\O}{\mathscr{O}}
\renewcommand{\hat}{\widehat}
\renewcommand{\b}[1]{[\!\hspace{.1mm}[{#1}]\!\hspace{.1mm}]}
\title[]{Finite generation of cohomology for Drinfeld doubles of finite group schemes}
\author{Cris Negron}
\address{Department of Mathematics, University of North Carolina, Chapel Hill, NC 27599}
\email{cnegron@email.unc.edu}
\begin{document}
\maketitle

\begin{abstract}
We prove that the Drinfeld double of an arbitrary finite group scheme has finitely generated cohomology.  That is to say, for $G$ any finite group scheme, and $D(G)$ the Drinfeld double of the group ring $kG$, we show that the self-extension algebra of the trivial representation for $D(G)$ is a finitely generated algebra, and that for each $D(G)$-representation $V$ the extensions from the trivial representation to $V$ form a finitely generated module over the aforementioned algebra.  As a corollary, we find that all categories $\rep(G)^\ast_\msc{M}$ dual to $\rep(G)$ are of also of finite type (i.e.\ have finitely generated cohomology), and we provide a uniform bound on their Krull dimensions.  This paper completes earlier work of E.\ M.\ Friedlander and the author.
\end{abstract}

\tocless\section{Introduction}
\label{sect:intro}

Fix $k$ an arbitrary field of finite characteristic.  Let us recall some terminology~\cite{negronplavnik}: A finite $k$-linear tensor category $\msc{C}$ is said to be of {\it finite type (over $k$)} if the self-extensions of the unit object $\Ext^\ast_\msc{C}(\1,\1)$ are a finitely generated $k$-algebra, and for any object $V$ in $\msc{C}$ the extensions $\Ext^\ast_\msc{C}(\1,V)$ are a finitely generated module over this algebra.  In this case, the {\it Krull dimension} $\Kdim\msc{C}$ of $\msc{C}$ is the Krull dimension of the extension algebra of the unit.  One is free to think of $\msc{C}$ here as the representation category $\rep(A)$ of a finite-dimensional Hopf algebra $A$, with monoidal structure induced by the comultiplication, and unit $\1=k$ provided by the trivial representation.
\par

It has been conjectured \cite[Conjecture 2.18]{etingofostrik04} \cite{friedlandersuslin97} that any finite tensor category, over an arbitrary base field, is of finite type.  Here we consider the category of representations for the Drinfeld double $D(G)$ of a finite group scheme $G$, which is identified with the so-called Drinfeld center $Z(\rep(G))$ of the category of finite $G$-representations~\cite{muger03,egno15}.  The Drinfeld double $D(G)$ is the smash product $\O(G)\rtimes kG$ of the algebra of global functions on $G$ with the group ring $kG$, under the adjoint action.  So, one can think of $Z(\rep(G))$, alternatively, as the category of coherent $G$-equivariant sheaves on $G$ under the adjoint action
\[
Z(\rep(G))=\rep(D(G))=\Coh(G)^G.
\]
\par

In the present work we prove the following.

\begin{theoremA}[\ref{thm:ZG}]
For any finite group scheme $G$, the Drinfeld center $Z(\rep(G))$ is of finite type and of Krull dimension
\[
\Kdim Z(\rep(G))\leq \Kdim\rep(G)+\operatorname{embed.dim}(G).
\]
\end{theoremA}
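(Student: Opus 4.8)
The plan is to study the cohomology of $D(G)$ via the Hopf algebra extension
$\O(G)\to D(G)\to kG$, in which the commutative Hopf subalgebra $\O(G)$ is normal with quotient the group algebra $kG$ acting by conjugation (the adjoint $G$-action). This presents the self-extension algebra $\Ext^*_{D(G)}(\1,\1)=H^*(D(G),k)$ as the abutment of a multiplicative Lyndon--Hochschild--Serre spectral sequence
\begin{equation*}
E_2^{p,q}=H^p\!\big(G,\Ext^q_{\O(G)}(k,k)\big)\Longrightarrow \Ext^{p+q}_{D(G)}(\1,\1),
\end{equation*}
where $H^p(G,-)$ denotes rational cohomology (i.e.\ $\Ext^p$ in $\rep(G)=kG\text{-mod}$) and the coefficient algebra carries the adjoint $G$-action. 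The idea is that both the outer functor $H^*(G,-)$ and the inner coefficient algebra are governed by finite generation results already in hand, so the theorem should follow from a finite-generation-for-spectral-sequences argument once those two inputs are secured. For a general module $V$ in $\Coh(G)^G$ the same extension yields $E_2^{p,q}=H^p(G,\Ext^q_{\O(G)}(k,V))\Rightarrow\Ext^{p+q}_{D(G)}(\1,V)$, which will feed the module statement.

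First I would pin down the coefficient algebra $R^\bullet:=\Ext^*_{\O(G)}(k,k)$. Since $\1$ is the skyscraper sheaf at the identity $e\in G$, this $\Ext$ algebra depends only on the local ring $\O(G)_e=\O(G^0)$, the coordinate algebra of the infinitesimal part $G^0$. The key structural claim is that $R^\bullet$ is a finitely generated graded-commutative $k$-algebra of Krull dimension $\dim_k \mathfrak{m}_e/\mathfrak{m}_e^2=\operatorname{embed.dim}(G)$, equipped with a rational $G$-action by algebra automorphisms. This is where I expect the main obstacle to lie: for a general local Artinian algebra $\Ext^*(k,k)$ need not be Noetherian, so one must genuinely exploit that $\O(G^0)$ is the coordinate ring of a group scheme. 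I would try to establish the claim by exhibiting $\O(G^0)$ as a complete intersection (so that, by Tate--Gulliksen, $R^\bullet$ is an exterior algebra on the cotangent space tensored with a polynomial algebra carrying one degree-$2$ generator per relation, of Krull dimension equal to the embedding dimension), or, if a clean complete-intersection statement fails over an imperfect base, by a direct Koszul-type resolution of $k$ over $\O(G^0)$ together with the infinitesimal case treated in earlier work with E.\ M.\ Friedlander.

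Next I would feed $R^\bullet$ into the outer cohomology. Because $R^\bullet$ is a finitely generated commutative $G$-algebra, cohomological finite generation for the finite group scheme $G$ (in the sense of Friedlander--Suslin and Touz\'e--van der Kallen) gives that $H^*(G,R^\bullet)$ is a finitely generated algebra and that $H^*(G,\Ext^*_{\O(G)}(k,V))$ is a finitely generated module over it for each $V$; here one also uses that $\Ext^*_{\O(G)}(k,V)$ is a finitely generated $R^\bullet$-module, which again follows from the complete-intersection (or Koszul) analysis of $\O(G^0)$. This identifies $E_2$ as a Noetherian bigraded algebra whose total Krull dimension is bounded by $\Kdim H^*(G,k)+\Kdim R^\bullet=\Kdim\rep(G)+\operatorname{embed.dim}(G)$.

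Finally I would transfer finite generation and the dimension bound from $E_2$ to the abutment. The spectral sequence is multiplicative and its $E_2$-page is Noetherian and generated over $H^*(G,k)$ in finitely many bidegrees; a standard argument, as in the proofs of Evens--Venkov and Friedlander--Suslin, then shows that $E_\infty$ is a Noetherian subquotient, that permanent cycles assemble into a finite set of algebra generators for $\Ext^*_{D(G)}(\1,\1)$, and that each $\Ext^*_{D(G)}(\1,V)$ is finitely generated over it. Since passage to $E_\infty$ and to an associated graded does not raise Krull dimension, the bound $\Kdim Z(\rep(G))\le\Kdim\rep(G)+\operatorname{embed.dim}(G)$ follows. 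The two steps demanding the most care are the complete-intersection/Koszul analysis of $\O(G^0)$ over a possibly imperfect field and the verification that the spectral sequence converges well enough (for instance, that it admits a vanishing line) for the finite-generation transfer to apply.
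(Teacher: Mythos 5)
Your outline reproduces the skeleton of the actual argument: the Lyndon--Hochschild--Serre spectral sequence for $\O(G)\to D(G)\to kG$ is exactly the hypercohomology spectral sequence for $\Ext^\ast_{\Coh(G)^G}(\1,V)=\mbb{H}^\ast(G,\RHom_{\Coh(G)}(\1,V))$ that the paper uses, the complete-intersection analysis of $\O(G)$ is carried out there via an embedding $G\to\mcl{H}$ (which also disposes of your imperfect-field worry: $\O(G)=\hat{\O}_{\mcl{H}}/\mfk{m}\hat{\O}_{\mcl{H}}$ is a quotient of a regular local ring by a regular sequence, equivariantly), and Touz\'e--Van der Kallen supplies the outer finite generation. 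The gap is in your last step, and it is not a technicality: there is no ``standard argument'' that transfers Noetherianity of the $E_2$-page of a multiplicative first-quadrant spectral sequence to the abutment. The obstruction is that $E_{r+1}$ is a quotient of a \emph{subalgebra} of $E_r$, and subalgebras of Noetherian graded-commutative algebras need not be Noetherian. What the Evens and Friedlander--Suslin arguments actually require is a Noetherian algebra of \emph{permanent cycles} over which $E_2$ is a finite \emph{module} (then every page, and $E_\infty$, is a Noetherian module over that fixed ring, and \cite[Lemma 1.6]{friedlandersuslin97} finishes); producing those permanent cycles is the hard content of both of those works (norm maps, respectively universal extension classes), not a formality. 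In your spectral sequence the indispensable generators are the degree-two polynomial classes of $R^\bullet=\Ext^\ast_{\O(G)}(k,k)$, whose invariants live on the edge $E_2^{0,q}=H^0(G,R^q)$, precisely where the differentials $d_r\colon E_r^{0,q}\to E_r^{r,q-r+1}$ originate; nothing in your setup forces them to survive. Equivalently, at the chain level: an invariant cohomology class of the dg $G$-algebra $\REnd_{\O(G)}(k)$ need not be represented by an invariant cocycle, so the $G$-action on the cohomology ring $R^\bullet$ alone does not give an action of $(R^\bullet)^G$ on the filtered complex computing $\Ext^\ast_{D(G)}(\1,\1)$. (Convergence, by contrast, is a non-issue: the spectral sequence is first quadrant.)

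Supplying exactly this missing input is the main technical contribution of the paper. The equivariant deformation $Z\to\O\to\O(G)$ attached to the embedding, the equivariant Koszul resolution $\K_Z$, and the equivariant formality of $\msc{R}=\REnd_{\K_Z\ot_Z\K_Z}(\K_Z)\simeq A_Z$ (Lemma \ref{lem:formal}) produce a genuinely $G$-equivariant dg algebra map $\mfk{def}^G_\1\colon\msc{R}\to\REnd_{\Coh(G)}(\1)$ with finite cohomology (Theorem \ref{thm:equiv_fg}). Because $\msc{R}$ is equivariantly formal, invariant elements of $A_Z$ act on $\RHom_{\Coh(G)}(\1,V)$ by honest equivariant chain maps, hence act on every page of the spectral sequence; this makes the whole spectral sequence one of modules over the fixed Noetherian ring $H^\ast(G,\1)\ot A_Z^G$, with $E_2$ finite over it by Touz\'e--Van der Kallen, and the transfer then goes through (this is the proof of Theorem \ref{thm:tv2}). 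So your proposal is not a different route; it is the paper's route with its central step --- the cochain-level equivariant realization of the deformation classes, i.e.\ the permanent-cycle problem --- left unresolved.
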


Here $\operatorname{embed.dim}(G)$ denotes the minimal dimension of a smooth (affine) algebraic group in which $G$ embeds as a closed subgroup.  The above theorem was proved for $G=\mbb{G}_{(r)}$ a Frobenius kernel in a smooth algebraic groups $\mbb{G}$ in work of E.\ M.\ Friedlander and the author~\cite{friedlandernegron18}.  Thus Theorem~\ref{thm:ZG} completes, in a sense, the project of~\cite{friedlandernegron18}.
\par

One can apply Theorem~\ref{thm:ZG}, and results of J.\ Plavnik and the author~\cite{negronplavnik}, to obtain an additional finite generation result for all dual tensor categories $\rep(G)^\ast_\msc{M}(:=\End_{\rep(G)}(\msc{M}))$, calculated relative to an exact $\rep(G)$-module category $\msc{M}$ \cite[Section 3.3]{etingofostrik04}.

\begin{corollary}\label{cor:ZGM}
Let $G$ be a finite group scheme, and $\msc{M}$ be an arbitrary exact $\rep(G)$-module category.  Then the dual category $\rep(G)^\ast_\msc{M}$ is of finite type and of uniformly bounded Krull dimension
\[
\Kdim\rep(G)^\ast_\msc{M}\leq \Kdim\rep(G)+\operatorname{embed.dim}(G).
\]
\end{corollary}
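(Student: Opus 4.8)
The plan is to deduce the corollary from Theorem~\ref{thm:ZG} by exploiting the Morita invariance of the Drinfeld center together with the transfer of finite generation from the center of a tensor category to the category itself, as established in~\cite{negronplavnik}; the dual category $\rep(G)^\ast_\msc{M}$ itself never needs to be described explicitly. Concretely, for any finite tensor category $\msc{C}$ and any exact $\msc{C}$-module category $\msc{M}$ the dual $\msc{C}^\ast_\msc{M}$ is Morita equivalent to $\msc{C}$, so there is a canonical equivalence of braided tensor categories $Z(\rep(G)^\ast_\msc{M})\cong Z(\rep(G))$. Feeding this into Theorem~\ref{thm:ZG}, the center $Z(\rep(G)^\ast_\msc{M})$ is of finite type, with
\[
\Kdim Z(\rep(G)^\ast_\msc{M})\leq \Kdim\rep(G)+\operatorname{embed.dim}(G).
\]

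It remains to transfer finiteness from the center down to $\rep(G)^\ast_\msc{M}$. For any finite tensor category $\msc{D}$ the forgetful functor $Z(\msc{D})\to\msc{D}$ is a unit-preserving tensor functor, and so equips $\Ext^\ast_\msc{D}(\1,\1)$ with the structure of an algebra over $\Ext^\ast_{Z(\msc{D})}(\1,\1)$, making each $\Ext^\ast_\msc{D}(\1,V)$ a module over the central cohomology. The key input from~\cite{negronplavnik} is that, once $Z(\msc{D})$ is of finite type, this structure map is module-finite; hence $\msc{D}$ is itself of finite type, and since a finite ring extension cannot raise Krull dimension we obtain $\Kdim\msc{D}\leq\Kdim Z(\msc{D})$. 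Taking $\msc{D}=\rep(G)^\ast_\msc{M}$ and combining with the displayed bound yields precisely the estimate claimed.

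The genuine content is packaged into the comparison theorem of~\cite{negronplavnik}, whose decisive point is the module-finiteness of $\Ext^\ast_\msc{D}(\1,\1)$ over $\Ext^\ast_{Z(\msc{D})}(\1,\1)$, resting on the adjunction between $\msc{D}$ and its center together with an accompanying relative cohomology analysis. Granting that result, the corollary follows formally: Morita invariance of the center reduces it to the finite-type statement for $Z(\rep(G))$ furnished by Theorem~\ref{thm:ZG}, and the Krull dimension bound propagates through the finite extension of cohomology rings. The main obstacle, therefore, lies entirely in the cited transfer principle rather than in the present deduction.
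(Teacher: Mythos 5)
Your proposal is correct and follows the same route as the paper: the paper's proof is a one-line deduction from Theorem~\ref{thm:ZG} together with \cite[Corollary 4.1]{negronplavnik}, and what you have written out (Morita invariance of the Drinfeld center, $Z(\rep(G)^\ast_\msc{M})\simeq Z(\rep(G))$, followed by the finite transfer of cohomology from $Z(\msc{D})$ to $\msc{D}$ with the resulting Krull dimension inequality) is precisely the content of that cited result. Since you, like the paper, ultimately defer the module-finiteness of $\Ext^\ast_{\msc{D}}(\1,\1)$ over $\Ext^\ast_{Z(\msc{D})}(\1,\1)$ to \cite{negronplavnik}, the two arguments are essentially identical.
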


\begin{proof}
Immediate from Theorem~\ref{thm:ZG} and~\cite[Corollary 4.1]{negronplavnik}.
\end{proof}

We view Theorem~\ref{thm:ZG}, and Corollary~\ref{cor:ZGM}, as occurring in a continuum of now very rich studies of cohomology for finite group schemes, e.g.~\cite{friedlanderparshall86II,friedlandersuslin97,suslinfriedlanderbendel97,friedlanderpevtsova07,touzevanderkallen10,drupieski16,bikp19}.

\begin{remark}
Exact $\rep(G)$-module categories have been classified by Gelaki~\cite{gelaki15}, and correspond to pairs $(H,\psi)$ of a subgroup $H\subset G$ and certain $3$-cocycle $\psi$ which introduces an associativity constraint for the action of $\rep(G)$ on $\rep(H)$.
\end{remark}

\begin{remark}
For an analysis of support theory for Drinfeld doubles of some solvable height $1$ group schemes, one can see \cite{negronpevtsova,negronpevtsovaII}.  The problem of understanding support for general doubles $D(G)$ is, at this point, completely open.
\end{remark}

\subsection{Approach via equivariant deformation theory}

In~\cite{friedlandernegron18}, where the Frobenius kernel $\mbb{G}_{(r)}$ in a smooth algebraic group $\mbb{G}$ is considered, we basically use the fact that ambient group $\mbb{G}$ provides a smooth, equivariant, deformation of $\mbb{G}_{(r)}$ parametrized by the quotient $\mbb{G}/\mbb{G}_{(r)}\cong \mbb{G}^{(r)}$ in order to gain a foothold in our analysis of cohomology.  In particular, the adjoint action of $\mbb{G}_{(r)}$ on $\mbb{G}$ descends to a trivial action on the twist $\mbb{G}^{(r)}$, so that the Frobenius map $\mbb{G}\to \mbb{G}^{(r)}$ can be viewed as smoothly varying family of $\mbb{G}_{(r)}$-algebras which deforms the algebra of functions $\O(\mbb{G}_{(r)})$.  Such a deformation situation provides ``deformation classes" in degree $2$,
\[
\{\text{deformation classes}\}=T_1\mbb{G}^{(r)}\subset \Ext^2_{\Coh(\mbb{G}_{(r)})^{\mbb{G}_{(r)}}}(\1,\1)=\Ext^2_{D(\mbb{G}_{(r)})}(\1,\1).
\]
One uses these deformation classes, in conjunction with work of Friedlander and Suslin \cite{friedlandersuslin97}, to find a finite set of generators for extensions.
\par

For a general finite group scheme $G$, we can try to pursue a similar deformation approach, where we embed $G$ into a smooth algebraic group $\mcl{H}$, and consider $\mcl{H}$ as a deformation of $G$ parametrized by the quotient $\mcl{H}/G$.  However, a general finite group scheme may not admit any \emph{normal} embedding into a smooth algebraic group.  (This is the case for certain non-connected finite group schemes, and should also be the case for restricted enveloping algebras $kG=u^{\rm res}(\mfk{g})$ of Cartan type simple Lie algebras, for example).  So, in general, one accepts that $G$ acts nontrivially on the parametrization space $\mcl{H}/G$, and that the fibers in the family $\mcl{H}$ are permuted by the action of $G$ here.  Thus we do not obtain a smoothly varying family of $G$-algebras deforming $\O(G)$ in this manner.
\par

One can, however, consider a \emph{type} of equivariant deformation theory where the group $G$ is allowed to act nontrivially on the parametrization space, and attempt to obtain \emph{higher} deformation classes in this instance
\[
\{\text{higher deformation classes}\}\subset \Ext^{\geq 2}_{\Coh(G)^G}(\1,\1)=\Ext_{D(G)}^{\geq 2}(\1,\1).
\]
We show in Sections~\ref{sect:equiv1} and~\ref{sect:equiv2} that this equivariant deformation picture can indeed be formalized, and that--when considered in conjunction with work of Touz\'{e} and Van der Kallen \cite{touzevanderkallen10}--it can be used to obtain the desired finite generation results for cohomology (see in particular Theorems~\ref{thm:equiv_fg} and \ref{thm:tv2}).

\begin{remark}
From a geometric perspective, one can interpret our main theorem as a finite generation result for the cohomology of non-tame stacky local complete intersections.  (Formally speaking, we only deal with the maximal codimension case here, but the general situation is similar.)  One can compare with works of Gulliksen~\cite{gulliksen74}, Eisenbud~\cite{eisenbud80}, and many others regarding the homological algebra of complete intersections.
\end{remark}

\subsection{Acknowledgements}

Thanks to Ben Briggs, Christopher Drupieski, Eric Friedlander, Julia Pevtsova, Antoine Touz\'{e}, and Sarah Witherspoon for helpful conversations.  The proofs of Lemmas \ref{lem:231} and \ref{lem:well_def} are due to Ben Briggs and Ragnar Buchweitz (with any errors in their reproduction due to myself).  This material is based upon work supported by the National Science Foundation under Grant No.\ DMS-1440140, while the author was in residence at the Mathematical Sciences Research Institute in Berkeley, California, during the Spring 2020 semester.

\section{Differential generalities}

Throughout $k$ is a field of finite characteristic, which is not necessarily algebraically closed.  Schemes and algebras are $k$-schemes and $k$-algebras, and $\ot=\ot_k$.  All group schemes are \emph{affine} group schemes which are of finite type over $k$, and throughout $G$ denotes an (affine) group scheme.

\subsection{Commutative algebras and modules}

A \emph{finite type} commutative algebra over a field $k$ is a finitely generated $k$-algebra.  A \emph{coherent} module over a commutative Noetherian algebra is a finitely generated module.  We adopt this language, at times, to distinguish clearly between these two notions of finite generation.

\subsection{$G$-equivariant dg algebras}

Consider $G$ an affine group scheme.  We let $\rep(G)$ denote the category of finite-dimensional $G$-representations, $\Rep(G)$ denote the category of integrable, i.e.\ locally finite, representations, and $\operatorname{Ch}(\Rep(G))$ denote the category of cochain complexes over $\Rep(G)$.  Each of these categories is considered along with its standard monoidal structure.
\par

By a $G$-algebra we mean an algebra object in $\Rep(G)$, and by a dg $G$-algebra we mean an algebra object in $\operatorname{Ch}(\Rep(G))$.  For $T$ any commutative $G$-algebra, by a \emph{$G$-equivariant dg $T$-algebra} $S$ we mean a $T$-algebra in $\operatorname{Ch}(\Rep(G))$.  Note that, for such a dg algebra $S$, the associated sheaf $S^\sim$ on $\Spec(T)$ is an equivariant sheaf of dg algebras, and vice versa.  Note also that a dg $G$-algebra is the same thing as an equivariant dg algebra over $T=k$.

\subsection{DG modules and resolutions}

For $S$ a dg $G$-algebra, we let $S\text{-dgmod}^G$ and $D(S)^G$ denote the category of $G$-equivariant dg modules over $S$ and its corresponding derived category $D(S)^G=(S\text{-dgmod}^G)[{\rm quis}^{-1}]$.  (Of course, by an equivariant dg module we mean an $S$-module in the category of cochains over $G$.)  If we specify some commutative Noetherian graded $G$-algebra $T$, and equivariant $T$-algebra structure on cohomology $T\to H^\ast(S)$, then we take
\[
D_{coh}(S)^G:=\left\{\begin{array}{c}
\text{The full subcategory in $D(S)^G$ consisting of dg modules}\\
\text{$M$ with finitely generated cohomology over $T$}
\end{array}\right\}.
\]
When $T=k$ we take $D_{fin}(S)^G=D_{coh}(S)^G$.
\par

A (non-equivariant) \emph{free} dg $S$-module is an $S$-module of the form $\oplus_{j\in J}\Sigma^{n_j}S$, where $J$ is some indexing set.  A semi-projective resolution of a (non-equivariant) dg $S$-module $M$ is a quasi-isomorphism $F\to M$ from a dg module $F$ equipped with a filtration $F=\cup_{i\geq 0} F(i)$ by dg submodules such that each subquotient $F(i)/F(i-1)$ is a summand of a free $S$-module.   An \emph{equivariant semi-projective resolution} of an equivariant dg module $M$ is a $G$-linear quasi-isomorphism $F\to M$ from an equivariant dg module $F$ which is non-equivariantly semi-projective.  The usual shenanigans, e.g.~\cite[Lemma 13.3]{drinfeld04}, shows that equivariant semi-projective resolutions always exist.

\subsection{Homotopy isomorphisms}
\label{sect:dg_htop}

Consider $S$ and $A$ dg $G$-algebras, over some given group scheme $G$.  By an \emph{(equivariant) homotopy isomorphism} $f:S\to A$ we mean a zig-zag of $G$-linear dg algebra quasi-isomorphism
\begin{equation}\label{eq:207}
S\overset{\sim}\leftarrow S_1\overset{\sim}\to S_2\cdots\overset{\sim}\leftarrow S_{N-1}\overset{\sim}\to A.
\end{equation}
\par

We note that we use the term \emph{homotopy} informally here, as we do not propose any particular model structure on the category of dg $G$-algebras (cf.\ \cite{tabuada05,toen07}).  Throughout the text, when we speak of homotopy isomorphisms between dg $G$-algebras we always mean equivariant homotopy isomorphisms.
\par

A homotopy isomorphism $f:S\to A$ as in \eqref{eq:207} specifies a triangulated equivalence between the corresponding derived categories of dg modules
\begin{equation}\label{eq:211}
f_\ast:D(S)^G\overset{\sim}\to D(A)^G,
\end{equation}
via successive application of base change and restriction along the maps to/from the $S_i$.  To elaborate, an equivariant quasi-isomorphism $f:S_1\to S_2$ specifies mutually inverse equivalences $S_2\ot_{S_1}^{\rm L}-:D(S_1)^G\to D(S_2)^G$ and $\operatorname{res}_f:D(S_2)^G\to D(S_1)$.  So for a homotopy isomorphism $f:S\to A$, compositions of restriction and base change produce the equivalence \eqref{eq:211}.
\par

Note that, on cohomology, such a homotopy isomorphism $f:S\to A$ induces an actual isomorphism of algebras $H^\ast(f):H^\ast(S)\to H^\ast(A)$, and one can check that for a dg module $M$ over $S$ we have
\[
H^\ast(f_\ast M)\cong H^\ast(A)\ot_{H^\ast(S)}H^\ast(M)\cong\res_{H^\ast(f)^{-1}}H^\ast(M).
\]
So, in particular, if $H^\ast(S)$ and $H^\ast(A)$ are $T$-algebras, for some commutative Noetherian $T$, and $H^\ast(f)$ is $T$-linear, then the equivalence \eqref{eq:211} restricts to an equivalence
\[
f_\ast:D_{coh}(S)^G\overset{\sim}\to D_{coh}(A)^G
\]
between the corresponding equivariant, coherent, derived categories.

\begin{definition}
We say a dg $G$-algebra $S$ is \emph{equivariantly formal} if $S$ is equivariantly homotopy isomorphic to its cohomology $H^\ast(S)$.
\end{definition}

\subsection{Derived maps and derived endomorphisms}

Fix $S$ a dg $G$-algebra, over a group scheme $G$.  For such $S$, the dg $\Hom$ functor $\Hom_S$ on $S\text{-dgmod}^G$ naturally takes values in $\operatorname{Ch}(\Rep(G))$.  Namely, for $x$ in the group ring $kG=\O(G)^\ast$, we act on functions $f\in \Hom_S(M,N)$ according to the formula
\[
(x\cdot f)(m):=x_1 f(S(x_2)m).
\]
With these actions each $\Hom_S(M,N)$ is a dg $G$-representation, and composition
\[
\Hom_S(N,L)\ot\Hom_S(M,N)\to \Hom_S(M,L)
\]
is a map of dg $G$-representations.  In particular, $\End_S(M)$ is a dg $G$-algebra for any equivariant dg module $M$ over $S$.

\begin{remark}
One needs to use cocommutativity of $kG$ here to see that $x\cdot f$ is in fact $S$-linear for $S$-linear $f$.
\end{remark}

We derive the functor $\Hom_S$ to $\operatorname{Ch}(\Rep(G))$ by taking 
\[
\RHom_S(M,N):=\Hom_S(M',N),
\]
where $M'\to M$ is any equivariant semi-projective resolution of $M$.  One can apply their favorite arguments to see that $\RHom_S(M,N)$ is well-defined as an object in $D(\Rep(G))$, or refer to the following lemma.

\begin{lemma}\label{lem:231}
For any two equivariant resolutions $M_1\to M$ and $M_2\to M$ there is an equivariant semi-projective dg module $F$ which admits two surjective, equivariant, quasi-isomorphisms $F\to M_1$ and $F\to M_2$.
\end{lemma}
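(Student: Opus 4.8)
The plan is to first build a single, generally non-projective, equivariant dg module which surjects quasi-isomorphically onto both $M_1$ and $M_2$ simultaneously, and only afterward cover that module by an equivariant semi-projective resolution. Write $p_i\colon M_i\to M$ for the two given equivariant quasi-isomorphisms. First I would form the equivariant homotopy pullback
\[
H:=M_1\oplus \Sigma^{-1}M\oplus M_2,\qquad d_H(m_1,\mu,m_2)=\big(dm_1,\ p_1(m_1)-p_2(m_2)-d\mu,\ dm_2\big),
\]
where signs are schematic. Since the $p_i$, the internal differentials, and the $S$-actions are all $G$-linear, a direct check that $d_H^2=0$ shows that $H$ is a well-defined equivariant dg $S$-module.

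Next I would record the key properties of the two projections $\pi_i\colon H\to M_i$. Each $\pi_i$ is visibly $S$-linear, $G$-equivariant, and degreewise surjective (one projects off a graded summand). The kernel of $\pi_1$ is the complex $\Sigma^{-1}M\oplus M_2$ with differential $(\mu,m_2)\mapsto(-p_2(m_2)-d\mu,\,dm_2)$, i.e.\ the shifted mapping cone $\Sigma^{-1}\operatorname{Cone}(p_2)$, which is acyclic precisely because $p_2$ is a quasi-isomorphism. Hence $\pi_1$ is a surjective quasi-isomorphism, and symmetrically so is $\pi_2$. Thus $H$ already realizes the conclusion of the lemma, save for the single defect that it need not be semi-projective, owing to the summand $\Sigma^{-1}M$.

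To remove this defect I would take an equivariant \emph{surjective} semi-projective resolution $\rho\colon F\to H$; concretely one may use the diagonally $G$-equivariant bar resolution $\operatorname{Bar}(S,S,H)\to H$, whose augmentation $s\ot m\mapsto sm$ is degreewise surjective and whose underlying $S$-module is (non-equivariantly) semi-free. Then $F$ is an equivariant semi-projective dg module, and the composites $\pi_i\circ\rho\colon F\to M_i$ are surjective, being composites of surjections, and are quasi-isomorphisms, being composites of quasi-isomorphisms. This gives the two required maps.

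The only genuinely delicate point, and the place I expect to have to argue with care, is the existence of the \emph{surjective} equivariant semi-projective resolution $\rho\colon F\to H$: the homotopy pullback and its projections pose no difficulty, so the crux is verifying that one can maintain $G$-equivariance while also guaranteeing surjectivity. This is exactly the ``usual shenanigans'' underlying the existence of equivariant resolutions referenced after the definition; surjectivity is automatic from the bar construction since $S$ is unital and $S\ot H$ is free as a non-equivariant $S$-module while carrying a $G$-equivariant augmentation. Beyond this standard (co)bar machinery over $\Rep(G)$, no new ideas are needed.
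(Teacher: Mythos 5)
Your proof is correct, and it shares the paper's two-step skeleton---first manufacture an equivariant dg module admitting surjective equivariant quasi-isomorphisms onto both $M_1$ and $M_2$, then cover it by a surjective equivariant semi-projective resolution---but you implement the first step differently. The paper first forces the maps $f_i\colon M_i\to M$ to be surjective by replacing $M_i$ with $M_i\oplus\Sigma^{-1}\operatorname{cone}(id_N)$ for a surjective resolution $N\to M$, and then takes the \emph{strict} fiber product $F_0=M_1\times_M M_2$; surjectivity makes $0\to F_0\to M_1\oplus M_2\to M\to 0$ exact, and the long exact sequence in cohomology shows the projections $F_0\to M_i$ are quasi-isomorphisms. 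You instead form the homotopy pullback $H=M_1\oplus\Sigma^{-1}M\oplus M_2$ directly, which requires no surjectivization and whose projection kernels are visibly the shifted cones $\Sigma^{-1}\operatorname{cone}(p_2)$ and $\Sigma^{-1}\operatorname{cone}(p_1)$, hence acyclic. The trade-offs are mild: your route skips the preprocessing and gives a more transparent reason for the quasi-isomorphism property, at the cost of the sign bookkeeping you flag as schematic (note the $S$-action on the $\Sigma^{-1}M$ factor must also carry the usual shift sign for $d_H$ to be $S$-linear); the paper's route stays entirely within strict short exact sequences, with no signs, reusing the same cone-of-identity device it needs anyway. Both arguments end with the same appeal to a surjective equivariant semi-projective resolution; the paper leaves this to the ``usual shenanigans'' while you exhibit $\operatorname{Bar}(S,S,H)$ explicitly. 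That witness is legitimate, with one caveat worth spelling out: the bar filtration's subquotients are $S\ot(\Sigma\bar{S})^{\ot n}\ot H$ with nonzero internal differential, so they are not summands of free dg modules on the nose; since $k$ is a field, each complex $(\Sigma\bar{S})^{\ot n}\ot H$ splits into spheres and disks, and refining the filtration accordingly is what exhibits $\operatorname{Bar}(S,S,H)$ as semi-projective in the paper's sense.
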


\begin{proof}
By adding on acyclic semi-projective summands we may assume that the given maps $f_i:M_i\to M$ are surjective.  For example, one can take a surjective resolution $N\to M$, consider the mapping cone $\operatorname{cone}(id_N)$, then replaces the $M_i$ with $(\Sigma^{-1}\operatorname{cone}(id_N))\oplus M_i$ .  So, let us assume that the $f_i$ here are surjective.
\par

We consider now the fiber product $F_0$ of the maps $f_1$ and $f_2$ to $M$.  Note that the structure maps $F_0\to M_i$ are surjective, since the $f_i$ are surjective.  We have the exact sequence
\[
0\to F_0\to M_1\oplus M_2\overset{[f_1\ -f_2]^T}\to M\to 0
\]
and by considering the long exact sequence on cohomology find that we have also an exact sequence
\[
0\to H^\ast(F_0)\to H^\ast(M_1)\oplus H^\ast(M_2)\to H^\ast(M)\to 0,
\]
with the map from $H^\ast(M_1)\oplus H^\ast(M_2)$ the sum of isomorphisms $\pm H^\ast(f_i)$.  It follows that the composites $H^\ast(F_0)\to H^\ast(M_1)\oplus H^\ast(M_2)\to H^\ast(M_i)$ are both isomorphisms, and hence that the maps $F_0\to M_1$ and $F_0\to M_2$ are quasi-isomorphisms.  One considers $F\to F_0$ any surjective, equivariant, semi-projective resolution to obtain the claimed result.
\end{proof}

For $M$ in $D(S)^G$ we take $\REnd_S(M)=\End_S(M')$, for $M'\to M$ any equivariant semi-projective resolution.  The following result should be known to experts.  The proof we offer is due to Benjamin Briggs and Ragnar Buchweitz.  I thank Briggs for communicating the proof to me, and allowing me to repeat it here.

\begin{lemma}\label{lem:well_def}
$\REnd_S(M)$ is well-defined, as a dg $G$-algebra, up to homotopy isomorphism.  Furthermore, if $M$ and $N$ are isomorphic in $D(S)^G$, then $\REnd_S(M)$ and $\REnd_S(N)$ are homotopy isomorphic as well.
\end{lemma}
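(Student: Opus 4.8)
The plan is to reduce the whole statement to a single key assertion: if $\pi\colon F\to F'$ is a surjective equivariant quasi-isomorphism between equivariant semi-projective dg $S$-modules, then $\End_S(F)$ and $\End_S(F')$ are homotopy isomorphic as dg $G$-algebras. Granting this, well-definedness follows immediately from Lemma~\ref{lem:231}: given two equivariant semi-projective resolutions $M_1'\to M$ and $M_2'\to M$, that lemma produces a semi-projective $F$ with surjective equivariant quasi-isomorphisms $F\to M_1'$ and $F\to M_2'$, and two applications of the key assertion give $\End_S(M_1')\simeq \End_S(F)\simeq \End_S(M_2')$. For the ``furthermore'' clause, an isomorphism $M\cong N$ in $D(S)^G$ lifts, after choosing surjective semi-projective resolutions $M'\to M$ and $N'\to N$, to an honest equivariant quasi-isomorphism $a\colon M'\to N'$ (using semi-projectivity of $M'$ to realize the derived map by a genuine chain map, which is then a quasi-isomorphism since it represents an isomorphism between semi-projective objects). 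Replacing $M'$ by $M'\oplus C$ for a suitable contractible semi-projective $C$, as in the proof of Lemma~\ref{lem:231}, I may assume $a$ is surjective, and the key assertion again applies.

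The crux is the key assertion, and here the main subtlety is that the obvious comparison maps $\phi\mapsto \pi\phi$ and $\psi\mapsto\psi\pi$ land in $\Hom_S(F,F')$, which carries no algebra structure; they are not dg-algebra maps and so cannot by themselves produce a homotopy isomorphism. The fix I would use is to interpose the dg $G$-subalgebra
\[
E=\{\phi\in\End_S(F): \phi(K)\subseteq K\},\qquad K:=\ker(\pi),
\]
of endomorphisms preserving the (acyclic) kernel $K$. One checks that $E$ is stable under the $G$-action of Section~\ref{sect:dg_htop} and under composition, so the inclusion $\iota\colon E\hookrightarrow\End_S(F)$ and the ``pass to the quotient $F/K=F'$'' map $q\colon E\to\End_S(F')$, $\phi\mapsto\bar\phi$, are both homomorphisms of dg $G$-algebras. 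I then claim both $\iota$ and $q$ are quasi-isomorphisms, which exhibits the desired homotopy isomorphism $\End_S(F)\overset{\sim}\leftarrow E\overset{\sim}\to \End_S(F')$.

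To verify the two quasi-isomorphisms I would exploit that the short exact sequence $0\to K\to F\to F'\to 0$ is degreewise split, as $F'$ is graded-projective. Applying the dg functors $\Hom_S(F,-)$ and $\Hom_S(-,F')$ yields short exact sequences of complexes
\[
0\to \Hom_S(F,K)\to \End_S(F)\overset{\pi_\ast}\to \Hom_S(F,F')\to 0
\]
and
\[
0\to \End_S(F')\overset{\pi^\ast}\to \Hom_S(F,F')\to \Hom_S(K,F')\to 0.
\]
Since $F$ is semi-projective and $K$ is acyclic, $\Hom_S(F,K)$ is acyclic; and since $K$ is itself semi-projective (it is K-projective, as applying $\Hom_S(-,A)$ for acyclic $A$ to the degreewise-split sequence shows $\Hom_S(K,A)$ acyclic) and acyclic, it is contractible, so $\Hom_S(K,F')$ is acyclic as well. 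Identifying $E=\ker\big(\End_S(F)\to\Hom_S(K,F')\big)$ via the surjective composite $\phi\mapsto(\pi\phi)|_K$, one obtains the short exact sequences $0\to E\to\End_S(F)\to\Hom_S(K,F')\to 0$ and $0\to \Hom_S(F,K)\to E\overset{q}\to\End_S(F')\to 0$, whose outer terms are acyclic, forcing $\iota$ and $q$ to be quasi-isomorphisms. I expect the verification that $K$ is semi-projective and contractible, together with the bookkeeping showing these sequences are exact as complexes of $G$-representations, to be the most delicate part; equivariance itself is harmless, since $E$ and the maps $\iota,q$ are manifestly $G$-linear and only the acyclicity of the underlying complexes is used.
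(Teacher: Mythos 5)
Your key assertion, its proof via the two short exact sequences, and the reduction of well-definedness to it through Lemma~\ref{lem:231} are all correct, and this part is in substance the paper's own argument: your algebra $E=\{\phi\in\End_S(F):\phi(K)\subseteq K\}$ is canonically isomorphic, via $\phi\mapsto(\phi,\bar\phi)$ (with $\bar\phi$ unique by surjectivity of $\pi$), to the fiber product $B$ of $\pi_\ast:\End_S(F)\to\Hom_S(F,F')$ and $\pi^\ast:\End_S(F')\to\Hom_S(F,F')$ that the paper uses, and your acyclic-outer-term computation is exactly what the paper means when it says to argue as in the proof of Lemma~\ref{lem:231}.

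The gap is in the ``furthermore'' clause. You assert that an isomorphism $M\cong N$ in $D(S)^G$ lifts to an honest equivariant chain map $a:M'\to N'$ between chosen semi-projective resolutions, ``using semi-projectivity of $M'$ to realize the derived map by a genuine chain map.'' That principle is purely non-equivariant. In this paper an equivariant semi-projective module is only required to be semi-projective \emph{after forgetting the $G$-action}, and such objects do not corepresent morphisms in $D(S)^G$: since $(-)^G$ is not exact in characteristic $p$, maps $\1\to X$ in $D(S)^G$ see all of the hypercohomology of $G$ with coefficients in $X$, while equivariant chain maps out of $\1$ see only invariant cocycles. Concretely, take $G=\mbb{Z}/p$ constant and $S=k$, so that $D(S)^G=D(\Rep(G))$ and every complex is semi-projective. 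Let $E=kG/(u^2)$ with $u=g-1$, the nonsplit self-extension of $k$, and let $N'$ be the complex $\cdots\to P_1\to P_0\to E$ in degrees $\leq 0$, where $P_\bullet\to uE$ is a $kG$-projective resolution of $uE\cong k$ and $P_0\to E$ is the composite $P_0\twoheadrightarrow uE\subset E$. The degree-zero projection $E\to E/uE\cong k$ is an equivariant quasi-isomorphism $N'\to \1$, so $N'\cong\1$ in $D(S)^G$; but an equivariant chain map $\1\to N'$ is precisely an invariant degree-zero cocycle, i.e.\ an element of $E^G=uE$, and all of these die in $H^0(N')=E/uE$. So with the legitimate choices $M'=\1$ and $N'=N'$, no equivariant chain map $M'\to N'$ is a quasi-isomorphism, and your lifting step fails. (The issue is existence of the lift; your remark that such a lift would automatically be a quasi-isomorphism is fine.)

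The paper's proof of this clause avoids lifting altogether: an isomorphism in $D(S)^G$ is represented by a roof $M\overset{\sim}\leftarrow \Omega\overset{\sim}\to N$ in which \emph{both} legs are equivariant quasi-isomorphisms, and then any equivariant semi-projective resolution $F\overset{\sim}\to\Omega$ is simultaneously a resolution of $M$ and of $N$, giving $\REnd_S(M)=\End_S(F)=\REnd_S(N)$ on the nose. If you want to keep the rest of your structure, you should simply replace your lifting step by this roof argument; your key assertion is then not even needed for the ``furthermore'' part.
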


Given an explicit isomorphism $\xi:M\to N$ in $D(S)^G$, the homotopy isomorphism $\RHom_S(M)\to \RHom_S(N)$ can in particular be chosen to lift the canonical isomorphism $\operatorname{Ad}_\xi:\Ext^\ast_S(M,M)\to \Ext^\ast_S(N,N)$ on cohomology.

\begin{proof}
Consider two equivariant semi-projective resolutions $M_1\to M$ and $M_2\to M$.  By Lemma \ref{lem:231} we may assume that the map $M_1\to M$ lifts to a surjective, equivariant, quasi-isomorphism $f:M_1\to M_2$.  In this case we have the two quasi-isomorphisms $f_\ast$ and $f^\ast$ of $\Hom$ complexes, and consider the fiber product
\begin{equation}\label{eq:261}
\xymatrixrowsep{3mm}
\xymatrix{
	& B\ar@{-->}[dl]\ar@{-->}[dr] & \\
\End_S(M_1)\ar[dr]_{f_\ast} & & \End_S(M_2)\ar[dl]^{f^\ast}\\
	&\Hom_S(M_1,M_2)
}
\end{equation}
As $f_\ast$ and $f^\ast$ are maps of dg $G$-representations, $B$ is a dg $G$-representation.  Furthermore, one checks directly that $B$ is a dg algebra, or more precisely a dg subalgebra in the product $\End(M_1)\times \End(M_2)$.  So the top portion of \eqref{eq:261} is a diagram of maps of dg $G$-algebras.
\par

As $M_1$ is projective, as a non-dg module, the map $f_\ast$ is a \emph{surjective} quasi-isomorphism.  One can therefore argue as in the proof of Lemma \ref{lem:231} to see that the structure maps from $B$ to the $\End_S(M_i)$ are quasi-isomorphisms.  So we have the explicit homotopy isomorphism
\[
\End_S(M_1)\overset{\sim}\leftarrow B\overset{\sim}\to \End_S(M_2).
\]

Now, if $M$ is isomorphic to $N$ in $D(S)^G$, then there is a third equivariant dg module $\Omega$ with quasi-isomorphisms $M\overset{\sim}\leftarrow \Omega\overset{\sim}\to N$.  Any resolution $F\overset{\sim}\to \Omega$ therefore provides a simultaneous resolution of $M$ and $N$, and we may take $\REnd_S(M)=\End_S(F)=\REnd_S(N)$.
\end{proof}

\section{Equivariant deformations and Koszul resolutions}
\label{sect:equiv1}

In Sections~\ref{sect:equiv1} and~\ref{sect:equiv2} we develop the basic homological algebra associated with equivariant deformations.  Our main aim here is to provide equivariant versions of results of Bezrukavnikov and Ginzburg~\cite{bezrukavnikovginzburg07}, and Pevtsova and the author~\cite[\S 4]{negronpevtsova} (cf.~\cite{eisenbud80,arkhipovbezrukavnikovginzburg04}).

\subsection{Equivariant deformations}

We recall that a deformation of an algebra $R$, parametrized by an augmented commutative algebra $Z$, is a choice of flat $Z$-algebra $Q$ along with an algebra map $Q\to R$ which reduces to an isomorphism $k\ot_Z Q\cong R$.  We call such a deformation $Q\to R$ an \emph{equivariant deformation} if all of the algebras present are $G$-algebras, and all of the structure maps $Z\to Q$, $Z\to k$, and $Q\to R$ are maps of $G$-algebras.
\par

The interesting point here, and the point of deviation with other interpretations of equivariant deformation theory, is that we allow $G$ to act nontrivially on the parametrization space $\Spec(Z)$ (or $\Spf(Z)$ in the formal setting).

\subsection{An equivariant Koszul resolution}
\label{sect:KZ}

We fix a group scheme $G$, and equivariant deformation $Q\to R$ of a $G$-algebra $R$ with formally smooth parametrization space space $\Spf(Z)$.  We require specifically that $Z$ is isomorphic to a power series $k\b{x_1,\dots,x_n}$ in finitely many variables.  As the distinguished point $1\in \Spf(Z)$ is a fixed point for the $G$-action, the cotangent space $T_1\Spf(Z)=m_Z/m_Z^2$ admits a natural $G$-action, and so does the graded algebra 
\[
\Sym(\Sigma m_Z/m_Z^2)=\wedge^\ast(m_Z/m_Z^2),
\]
which we view as a dg $G$-algebra with vanishing differential.

\begin{lemma}[{cf.~\cite[Lemma 5.1.4]{arkhipovbezrukavnikovginzburg04}}]\label{lem:equiv_kos}
One can associate to the parametrization algebra $Z$ a commutative equivariant dg $Z$-algebra $\K_Z$ such that
\begin{enumerate}
\item $\mcl{K}_Z$ is finite and flat over $Z$, and
\item $\mcl{K}_Z$ admits quasi-isomorphisms $\K_Z\overset{\sim}\to k$ and $k\ot_Z \K_Z\overset{\sim}\to \Sym(\Sigma m_Z/m_Z^2)$ of equivariant dg algebras.
\end{enumerate}
\end{lemma}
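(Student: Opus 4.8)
The plan is to realize $\K_Z$ as an equivariant Koszul complex $\K_Z=\wedge^\ast_Z\bar V$ built on a single $G$-equivariant $Z$-module $\bar V$ which is free of rank $n$ over $Z$ and carries a $G$-equivariant, $Z$-linear map $\epsilon\colon\bar V\to m_Z\subseteq Z$ realizing a \emph{minimal cover}, i.e.\ inducing an isomorphism $\bar V\ot_Z k\xrightarrow{\sim}m_Z/m_Z^2=V$. The differential on $\wedge^\ast_Z\bar V$ is contraction by $\epsilon$, extended as a $Z$-linear derivation; that $d^2=0$ is automatic since $\epsilon$ lands in the (even, central) base $Z$. Granting such a pair $(\bar V,\epsilon)$, all three assertions become formal: $\K_Z$ is free of rank $2^n$ over $Z$, hence finite and flat, giving (1); the images of a basis of $\bar V$ form a regular system of parameters in the regular local ring $Z$, so the classical Koszul computation gives acyclicity $\K_Z\xrightarrow{\sim}k$; and because $\epsilon(\bar V)\subseteq m_Z$ the differential vanishes after applying $k\ot_Z-$, so $k\ot_Z\K_Z=\wedge^\ast(\bar V\ot_Z k)=\wedge^\ast V$ with zero differential, matching $\Sym(\Sigma V)$ as a $G$-algebra on the nose. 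One then checks directly that all structure maps are maps of equivariant dg algebras.

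Everything thus reduces to producing the equivariant minimal cover $(\bar V,\epsilon)$, and this is where the real difficulty lies. The naive guess $\bar V=Z\ot_k V$ with the diagonal $G$-action and $\epsilon$ the $Z$-linear extension of a section $s\colon V\to m_Z$ requires a $G$-equivariant splitting of the cotangent sequence $0\to m_Z^2\to m_Z\to V\to 0$. Such a splitting need not exist: already for $G=\alpha_p$ acting on $Z=k\b{x}$ through a non-linearizable vector field such as $\delta=x^2\partial_x$, there is no $G$-stable lift of $\bar x$ to $m_Z$, hence no $G$-equivariant regular system of parameters. The moral is that one cannot choose equivariant coordinates, and the correct $\bar V$ must carry a \emph{twisted} equivariant structure on its underlying free module.

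To build $\bar V$ I would descend to the associated graded and then lift. Since $Z$ is regular, $\operatorname{gr}_{m_Z}Z=\Sym V$ canonically and $G$-equivariantly, and over $\Sym V$ the minimal cover $\Sym V\ot_k V\to m_{\Sym V}$, together with its whole Koszul complex, is the canonical (choice-free) construction of Arkhipov--Bezrukavnikov--Ginzburg, hence is manifestly equivariant. The plan is to lift this canonical graded cover through the $m_Z$-adic filtration to a filtered equivariant cover $\epsilon\colon\bar V\to m_Z$ over the complete ring $Z$, solving the successive lifting problems over the truncations $Z/m_Z^{N+1}$ and passing to the limit. Equivalently, working in the category of $G$-equivariant $Z$-modules---modules over the smash product $A=Z\rtimes kG$, which is module-finite over the complete local ring $Z$ and hence semiperfect---one lifts the $kG$-module $V$ to a $Z$-free $A$-module admitting the cover map.

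I expect this filtered equivariant lifting to be the main obstacle. The obstructions to extending a partial cover by one more filtration degree live in $\Ext^{\geq1}_{kG}$-groups with coefficients in the symmetric powers $\Sym^{\geq1}V$, which do not vanish in general; indeed the failure of the naive section is exactly the statement that these are nonzero for the diagonal lift. The way through is to use the extra freedom in the \emph{module} lift $\bar V$ (not only in the map $\epsilon$) to absorb them, exploiting completeness of $Z$ together with the self-injectivity (Frobenius property) of $kG$, and using that the graded cover is free/induced over $\Sym V$ to keep the successive problems tractable. Once $(\bar V,\epsilon)$ is constructed, the remaining verifications are the routine ones indicated above.
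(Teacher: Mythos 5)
Your reduction is clean and your diagnosis of the difficulty is exactly right: the naive diagonal candidate $\bar V=Z\ot_k V$ requires an equivariant splitting of $0\to m_Z^2\to m_Z\to V\to 0$, and your $\alpha_p$-example $\delta=x^2\partial_x$ on $Z=k\b{x}$ is a valid obstruction (one checks $\delta^m(x)=m!\,x^{m+1}$, so $\delta^p=0$, while any lift $v=x+O(x^2)$ has $\delta(v)=x^2+O(x^3)\neq 0$ even though $G$ acts trivially on $m_Z/m_Z^2$). The problem is that everything in your proposal then hinges on the existence of the twisted pair $(\bar V,\epsilon)$, and this is precisely what you do not prove. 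Your final paragraph is a plan, not an argument: you concede that the obstructions to the filtered lifting, living in $\Ext^{\geq 1}_{kG}$ with coefficients in $\Sym^{\geq 1}V$, are nonzero in general, and the proposed mechanism for ``absorbing'' them (freedom in the module lift, completeness of $Z$, the Frobenius property of $kG$) is never made into a construction. Note that your rank-one example succeeds for a reason special to $n=1$: there $m_Z$ itself is $Z$-free of rank one, so $(\bar V,\epsilon)=(m_Z,\mathrm{incl})$ works on the nose (your $\delta e=xe$ is exactly this module); nothing in the proposal produces $(\bar V,\epsilon)$ for $n\geq 2$, and it is not clear that such a minimal equivariant cover exists for an arbitrary finite group scheme action.

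This gap is exactly what the paper's construction is engineered to avoid. The paper never asks for a minimal, rank-$n$, equivariant cover of $m_Z$: following \cite{ciocankapranov01} (cf.\ \cite[Lemma 5.1.4]{arkhipovbezrukavnikovginzburg04}), it starts from \emph{any} finite-dimensional $G$-stable subspace $V_1\subset m_Z$ generating $m_Z$ --- such subspaces always exist because every $kG$-module is locally finite for finite $G$; no splitting of $m_Z\to m_Z/m_Z^2$ is needed --- and then inductively adjoins equivariant spaces of cells $V_i$ in degree $-i$ to kill cohomology, Tate-style. The resulting semifree resolution $\K'$ is then truncated in degree $-n$, using that $Z$ has finite flat dimension, to obtain a finite flat equivariant dg algebra $\K_Z$; finally, the quasi-isomorphism $k\ot_Z\K_Z\overset{\sim}\to\Sym(\Sigma m_Z/m_Z^2)$ is not an identity (the reduction $k\ot_Z\K_Z$ has many cells and is not the exterior algebra) but an explicit equivariant projection annihilating the higher cells, justified by a cocycle/coboundary analysis comparing against a non-equivariant standard Koszul subalgebra. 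In other words, the paper trades minimality for the freedom to make equivariant choices degree by degree; your insistence on the minimal model $\wedge^\ast_Z\bar V$ of rank $2^n$ is what creates the unresolved lifting problem. Unless you can actually establish existence of $(\bar V,\epsilon)$ in general --- which I doubt is possible without substantially new ideas, and which the lemma does not require --- the proposal does not constitute a proof.
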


\begin{proof}[Construction]
We first construct an unbounded dg resolution $\K'$ of $k$, as in \cite[Section 2.6]{ciocankapranov01}, then truncate to obtain $\K$.  We construct $\K'$ as a union $\K'=\varinjlim_{i\geq 0} \K(i)$ of dg subalgebras $\K(i)$ over $Z$.  We define the $\K(i)$ inductively as follows: Take $\K(0)=Z$ and, for $V_1$ a finite-dimensional $G$-subspace generating the maximal ideal $m_Z$ in $Z$, we take $\K(1)=\Sym_Z(Z\ot \Sigma V_1)$ with differential $d(\Sigma v)=v$, $v\in V_1$.  
\par

Suppose now that we have $\K(i)$ an equivariant dg algebra which is finite and flat over $Z$ in each degree, and has (unique) augmentation $\K(i)\to k$ which is a quasi-isomorphism in degrees $> -i$.  Let $V_i$ be an equivariant subspace of cocycles in $\K(i)^{-i}$ which generates $H^{-i}(\K(i))$, as a $Z$-module.  Define
\[
\K(i+1)=\Sym_Z(Z\ot \Sigma V_i)\ot_Z \K(i),\ \text{with extended differential $d(\Sigma v)=v$ for }v\in V_i.
\]
We then have the directed system of dg algebras $\K(0)\to \K(1)\to \dots$ with colimit $\K'=\varinjlim_i \K(i)$.  By construction $\K'$ is finite and flat over $Z$ in each degree, and has cohomology $H^\ast(\K')=k$.
\par

Since $Z$ is of finite flat dimension, say $n$, the quotient
\[
(\K_Z:=)\K=\K'/((\K')^{<-n}+B^{-n}(\K'))
\]
is finite and flat over $Z$ in all degrees.  Furthermore, $\K$ inherits a $G$-action so that the quotient map $\K'\to \K$ is an equivariant quasi-isomorphism.  So we have produced a finite flat dg $Z$-algebra $\K$ with equivariant quasi-isomorphism $\K\overset{\sim}\to k$.
\par

We consider a section $m_Z/m_Z^2\to V_1$ of the projection $V_1\to m_Z/m_Z^2$, and let $\bar{S}_1\subset V_1$ denote the image of this section.  Take $S=\Sym_Z(Z\ot \Sigma \bar{S}_1)$ with differential specified by $d(\Sigma v)=v$ for $v\in \bar{S}_1$.  Then $S$ the the standard Koszul resolution for $k$, and the inclusion $S\to \K$ is a (non-equivariant) dg algebra quasi-isomorphism.  Since $\K$ and $S$ are bounded above and flat over $Z$ in each degree, the reduction $k\ot_Z S\to k\ot_Z \K$ remains a quasi-isomorphism and we have an isomorphism of algebras
\[
\Sym(\Sigma m_Z/m_Z^2)\cong H^\ast(k\ot_ZS)\overset{\cong}\to H^\ast(k\ot_Z \K).
\]
\par

Note that the dg subalgebra $\Sym(\Sigma V_1)\subset k\ot_Z \K$ consists entirely of cocycles, and furthermore $Z^{-1}(k\ot_Z \K)=\Sigma V_1$.  We see also that the intersection $V_1\cap m_Z^2$ consists entirely of coboundaries, as such vectors $v$ lift to cocycles in the acyclic complex $\K$ which are of the form $v+m_Z\ot V_1$.  A dimension count now implies that the projection
\[
V_1=Z^{-1}(k\ot_Z \K)\to H^{-1}(k\ot_Z \K)
\]
reduces to an isomorphism $V_1/(m_Z^2\cap V_1)=H^1(k\ot_Z K)$.  Hence, for the degree $-1$ coboundaries in $k\ot_Z\K$, we have $B^{-1}=V_1\cap m_Z^2$.  One now consults the diagram
\[
\xymatrix{
\Sym(\Sigma m_Z/m_Z^2)\ar[d]_\cong\ar[r]^{\rm incl} & \Sym(\Sigma V_1)\ar[d]\ar[r]^(.25){\rm proj} & \Sym(\Sigma V_1)/(B^{-1})\cong \Sym(\Sigma m_Z/m_Z^2)\ar[dl]\\
H^\ast(k\ot_Z S)\ar[r]^\cong  & H^\ast(k\ot_Z \K),
}
\]
to see that the intersection $B^\ast(k\ot_Z\K)\cap \Sym(\Sigma V_1)$ is necessarily the ideal $(B^1)$ generated by the degree $-1$ coboundaries.  So we find that the projection
\[
f:k\ot_Z \K\to \Sym(\Sigma V_1)/(B^1)\cong \Sym(\Sigma m_Z/m_Z^2)
\]
which annihilates (the images of) all cells $\Sigma V_i$ with $i>1$ is an equivariant dg algebra map, and furthermore an equivariant dg algebra quasi-isomorphism.
\end{proof}

In the following $Z$ a commutative $G$-algebra which is isomorphic to a power series in finitely many variables, as above.

\begin{definition}
An \emph{equivariant Koszul resolution} of $k$ over $Z$ is a $G$-equivariant dg $Z$-algebra $\K_Z$ which is finite and flat over $Z$, comes equipped with an equivariant dg algebra quasi-isomorphism $\epsilon:\K_Z\overset{\sim}\to k$, and also comes equipped with an equivariant dg map $\pi:\K_Z\to \Sym(\Sigma m_Z/m_Z^2)$ which reduces to a quasi-isomorphism $k\ot_Z\K_Z\overset{\sim}\to \Sym(\Sigma m_Z/m_Z^2)$ along the augmentation $Z\to k$.
\end{definition}

Lemma~\ref{lem:equiv_kos} says that equivariant Koszul resolutions of $k$, over such $Z$, always exists.

\subsection{The Koszul resolution associated to an equivariant deformation}
\label{sect:KQ}

Consider $Q\to R$ an equivariant deformation, parameterized by a formally smooth space $\Spf(Z)$, as in Section \ref{sect:KZ}.  For any equivariant Koszul resolution $\K_Z\overset{\sim}\to k$ over $Z$, the product
\begin{equation}\label{eq:205}
\K_Q:=Q\ot_Z \K_Z
\end{equation}
is naturally a dg $G$-algebra which is a finite and flat extension of $Q$.  Since finite flat modules over $Z$ are in fact free, $\K_Q$ is more specifically \emph{free} over $Q$ in each degree.  Flatness of $Q$ over $Z$ implies that the projection
\[
id_Q\ot_Z\epsilon:\K_Q\overset{\sim}\to Q\ot_Z k=R
\]
is a quasi-isomorphism of dg $G$-algebras (cf.~\cite[Section 5.2]{arkhipovbezrukavnikovginzburg04},~\cite[Section 3]{bezrukavnikovginzburg07}, \cite[Section 2]{avramovsun98}).  We call the dg algebra~\eqref{eq:205}, deduced from a particular choice of equivariant Koszul resolution for $Z$, the (or \emph{a}) Koszul resolution of $R$ associated to the equivariant deformation $Q\to R$.

\section{Deformations associated to group embeddings}

Consider now $G$ a \emph{finite} group scheme, and a closed embedding of $G$ into a smooth affine algebraic group $\mcl{H}$.  (We mean specifically a map of group schemes $G\to \mcl{H}$ which is, in addition, a closed embedding.)  We explain in this section how such an embedding $G\to \mcl{H}$ determines an equivariant deformation $\O\to \O(G)$ which fits into the general framework of Section \ref{sect:equiv1}.
\par

Note that such closed embeddings $G\to \mcl{H}$ always exists for finite $G$.  For example, if we choose a faithful $G$-representation $V$ then the corresponding action map $G\to \operatorname{GL}(V)$ is a closed embedding of $G$ into the associated general linear group.

\subsection{The quotient space}

For any embedding $G\to \mcl{H}$ of $G$ into smooth $\mcl{H}$ we consider the quotient space $\mcl{H}/G$.  The associated quotient map $\mcl{H}\to\mcl{H}/G$ is $G$-equivariant, where we act on $\mcl{H}$ via the adjoint action and on $\mcl{H}/G$ via translation.
\par

This is all clear geometrically, but let us consider this situation algebraically.  Functions on the quotient $\O(\mcl{H}/G)$ are the right $G$-invariants $\O(\mcl{H})^G$ in $\O(\mcl{H})$, or rather the left $\O(G)$-coinvariants.  Then $\O(\mcl{H}/G)$ is a right $\O(\mcl{H})$-coideal subalgebra in $\O(\mcl{H})$, in the sense that the comultiplication on $\O(\mcl{H})$ restricts to a coaction
\[
\rho:\O(\mcl{H}/G)\to \O(\mcl{H}/G)\ot\O(\mcl{H}).
\]
We project along $\O(\mcl{H})\to \O(G)$ to obtain the translation coaction of $\O(G)$ on $\O(\mcl{H}/G)$.  The left translation coaction of $\O(G)$ on $\O(\mcl{H})$ restricts to a trivial coaction on $\O(\mcl{H}/G)$.  So, $\O(\mcl{H}/G)$ is a sub $\O(G)$-bicomodule in $\O(\mcl{H})$.
\par

We consider the dual \emph{action} of the group ring $kG=\O(G)^\ast$ on $\O(\mcl{H})$, and find that the inclusion $\O(\mcl{H}/G)\to \O(\mcl{H})$ is an inclusion of $G$-algebras, where we act on $\O(\mcl{H})$ via the adjoint action and on $\O(\mcl{H}/G)$ by translation.  We have the following classical result, which can be found in~\cite[Proposition 5.25 and Corollary 5.26]{milne17}.

\begin{theorem}
Consider a closed embedding $G\to \mcl{H}$ of a finite group scheme into a smooth algebraic group $\mcl{H}$.  The algebra of functions $\O(\mcl{H})$ is finite and flat over $\O(\mcl{H}/G)$, and $\O(\mcl{H}/G)$ is a smooth $k$-algebra.
\end{theorem}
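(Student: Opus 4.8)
The plan is to deduce both assertions from a single geometric fact: that the right translation action of $G$ on $\mcl{H}$ is \emph{free}, so that $\mcl{H}\to\mcl{H}/G$ is a $G$-torsor and, in particular, $\O(\mcl{H})$ is faithfully flat over $\O(\mcl{H}/G)$. Write $A=\O(\mcl{H})$, $C=\O(G)$, and let $\rho:A\to A\ot C$ be the right coaction dual to right translation, i.e.\ the composite of the comultiplication on $A$ with the surjection $A\to C$ dual to the embedding $G\to\mcl{H}$. By definition $B:=\O(\mcl{H}/G)=A^{\mathrm{co}\,C}=\{a\in A:\rho(a)=a\ot 1\}$ is the equalizer of $\rho$ and $a\mapsto a\ot 1$. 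Once faithful flatness of $A$ over $B$ is in hand, both finiteness/flatness and smoothness follow by formal descent.

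The genuinely nontrivial step, and the main obstacle, is \emph{faithful flatness of $A$ over $B$}. I would take this from the faithful flatness theorem for commutative Hopf algebras (Takeuchi): over a field, a commutative Hopf algebra $A$ is faithfully flat over the coinvariants $A^{\mathrm{co}\,C}$ of any quotient Hopf algebra $A\to C$. Since $G$ is finite one can alternatively argue by homogeneity: $A$ is module-finite over $B$ by the finiteness theorem for invariants of the finite-dimensional Hopf algebra $kG=\O(G)^\ast$, whence $B$ is Noetherian and of finite type over $k$ (Artin--Tate); by generic flatness $A$ is flat over a nonempty open of $\Spec B$, and since the left $\mcl{H}$-translation action on $\mcl{H}/G$ is transitive the flat locus is $\mcl{H}$-stable, hence all of $\Spec B$, with surjectivity of $\Spec A\to\Spec B$ again following from transitivity. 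The Hopf-algebraic route is cleaner as it needs no prior finiteness input, so I would lead with Takeuchi and treat the homogeneity argument as a remark.

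\emph{Finiteness and flatness.} Granting faithful flatness, freeness of the translation action is precisely the statement that the Galois map
\[
\gamma:A\ot_B A\longrightarrow A\ot C,\qquad a\ot a'\mapsto (a\ot 1)\rho(a'),
\]
is an isomorphism (dually, that $\mcl{H}\times G\to\mcl{H}\times_{\mcl{H}/G}\mcl{H}$, $(h,g)\mapsto(h,hg)$, is an isomorphism, where the identification of the fiber product's coordinate ring with $A\ot_B A$ itself uses faithful flatness). Regarded as a module over $A$ via the left factor, that is, via the base-change structure $A\ot_B(-)$, the right-hand side is finite free of rank $\dim_k C$. Hence $A\ot_B A$ is a finite $A$-module, and descent of finite generation along the faithfully flat map $B\to A$ shows $A$ is a finite $B$-module. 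By Artin--Tate $B$ is then of finite type over $k$, hence Noetherian, and a finitely generated flat module over a Noetherian ring is projective; thus $A$ is finite and flat over $B$, indeed finite projective of rank $\dim_k\O(G)$.

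\emph{Smoothness.} Since $\mcl{H}$ is smooth, $A$ is a geometrically regular $k$-algebra, in particular regular. Regularity descends along faithfully flat maps of Noetherian rings: for each prime $\mathfrak p\subset B$, faithful flatness provides a prime of $A$ above it, and the induced flat local homomorphism into a regular local ring (Matsumura) forces $B_\mathfrak p$ regular; so $B$ is regular. To upgrade regularity to smoothness over a possibly imperfect $k$, I would base change to $\bar k$: flatness of $\bar k$ over $k$ makes $-\ot_k\bar k$ commute with the equalizer defining $B$, so $B\ot_k\bar k=(A\ot_k\bar k)^{\mathrm{co}\,(C\ot_k\bar k)}$, and rerunning the descent over $\bar k$ (where $A\ot_k\bar k$ is regular because $\mcl{H}$ is smooth) shows $B\ot_k\bar k$ is regular. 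A finite-type $k$-algebra that becomes regular after base change to $\bar k$ is smooth, recovering the statement of Milne.
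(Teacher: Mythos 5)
The paper itself gives no argument for this statement: it is quoted as a classical fact, with a citation to Milne's book (Propositions 5.25 and 5.26 there), so the comparison is with the classical proof your sketch is meant to reproduce. Your fallback ``homogeneity'' argument is essentially that classical proof; the problem is the route you say you would lead with, which contains a genuine gap. The faithful flatness theorem you attribute to Takeuchi is false in the stated generality: it is not true that a commutative Hopf algebra $A$ is faithfully flat over $A^{\mathrm{co}\,C}$ for an arbitrary quotient Hopf algebra $A\to C$. Take $A=\O(\operatorname{SL}_2)$ and $C=\O(U)$ with $U$ the subgroup of upper unitriangular matrices, so that $U=\operatorname{Stab}(e_1)$ for the action on column vectors and the sheaf quotient $\operatorname{SL}_2/U$ is $\mbb{A}^2\setminus\{0\}$. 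Then $A^{\mathrm{co}\,C}=\O(\mbb{A}^2\setminus\{0\})=k[x,y]$, and $\Spec A\to\Spec k[x,y]$ is flat but misses the origin (the fiber there is empty, since a matrix with vanishing first column has determinant $0$); so $A$ is flat but not faithfully flat over its coinvariants. The next step breaks as well: even granting faithful flatness, freeness of the translation action does not yield the Galois isomorphism. For the Borel subgroup $B\subset\operatorname{SL}_2$, which acts freely by right translation, one has $A^{\mathrm{co}\,\O(B)}=\O(\mbb{P}^1)=k$, over which $A$ is trivially faithfully flat, yet $A\ot_k A\to A\ot\O(B)$ cannot be an isomorphism (the two sides have Krull dimensions $6$ and $5$). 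The structural reason both steps fail is the same: for a non-finite subgroup, the coinvariant ring is only the coordinate ring of the \emph{affinization} of $\mcl{H}/G$, which need not represent the quotient at all. So an argument that, as you advertise, ``needs no prior finiteness input'' cannot be correct, because the conclusions of the theorem are simply false for general closed subgroup schemes of $\mcl{H}$.

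The repair is to invert your presentation: finiteness of $G$ must enter at the first step, not in a remark. Your second argument does this correctly and should be the lead. For a finite group scheme acting on a finite type commutative algebra, $A$ is integral, hence module-finite, over $A^{G}$ (reduce via the connected--\'etale sequence and the Frobenius filtration to the \'etale case, where one uses products of Galois translates, and the height-one case, where $a^p$ is killed by derivations); then $A^G$ is of finite type by Artin--Tate, generic flatness plus transitivity of the left $\mcl{H}$-translation action on $\Spec A^G$ gives flatness everywhere, and surjectivity is automatic because a finite dominant morphism is closed. Note that this finiteness of $A$ over $A^G$ is itself the nontrivial classical core of the statement (it is essentially what Milne, or SGA3/Demazure--Gabriel, proves), so you are not getting it for free; but citing it is legitimate in a sketch. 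Once finite faithful flatness is in hand, your final paragraph---descent of regularity along a faithfully flat map, base change to $\bar{k}$, and the equivalence of geometric regularity with smoothness for finite type algebras---is correct and can stand as written.
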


\subsection{The associated equivariant deformation sequence}
\label{sect:embed_def}

Consider $G\to \mcl{H}$ as above and let $1\in \mcl{H}/G$ denote the image of the identity in $\mcl{H}$, by abuse of notation.  We complete the inclusion $\O(\mcl{H}/G)\to \O(\mcl{H})$ at the ideal of definition for $G$ to get a finite flat extension $\hat{\O}_{\mcl{H}/G}\to\hat{\O}_\mcl{H}$.  Take
\[
Z=\hat{\O}_{\mcl{H}/G}\ \ \text{and}\ \ \O=\hat{\O}_\mcl{H}.
\]
So we have the deformation $\O\to \O(G)$, with formally smooth parametrizing algebra $Z$.  A proof of the following Lemma can be found at \cite[Lemma 2.10]{negronpevtsova}.

\begin{lemma}
The completion $\O=\hat{\O}_\mcl{H}$ is Noetherian and of finite global dimension.
\end{lemma}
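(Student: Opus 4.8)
The plan is to reduce the statement to the standard fact that a complete regular local ring has finite global dimension, by exhibiting the completion as a finite product of such rings. Write $I\subset\O(\mcl{H})$ for the defining ideal of the closed subgroup $G$, so that $\O=\hat{\O}_\mcl{H}$ is by definition the $I$-adic completion of $\O(\mcl{H})$.

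First I would record the inputs coming from smoothness of $\mcl{H}$. Since $\mcl{H}$ is a smooth affine algebraic group, $\O(\mcl{H})$ is a regular, finitely generated $k$-algebra; in particular it is Noetherian of finite global dimension equal to $\dim\mcl{H}$. Noetherianity of $\O$ is then immediate, as the $I$-adic completion of a Noetherian ring is again Noetherian.

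For the global dimension, the essential point is that $G$ is \emph{finite}, so that $\O(G)=\O(\mcl{H})/I$ is a finite-dimensional $k$-algebra and $V(I)$ consists of only finitely many closed points $m_1,\dots,m_r$ of $\Spec\O(\mcl{H})$. Because $\sqrt{I}=\bigcap_i m_i$, localizing at each $m_i$ makes $I\O(\mcl{H})_{m_i}$ an $m_i$-primary ideal, so the $I$-adic and $m_i$-adic topologies agree there. A routine inverse-limit computation, using $\O(\mcl{H})/I^n\cong\prod_i \O(\mcl{H})_{m_i}/I^n\O(\mcl{H})_{m_i}$, then yields the finite product decomposition
\[
\O=\hat{\O}_\mcl{H}\cong\prod_{i=1}^r\widehat{\O(\mcl{H})_{m_i}}.
\]
Each $\O(\mcl{H})_{m_i}$ is a localization of a regular ring, hence regular local, and completion preserves regularity, so every factor $\widehat{\O(\mcl{H})_{m_i}}$ is a complete regular local ring. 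By Serre's homological characterization of regularity its global dimension equals its Krull dimension, which is at most $\dim\mcl{H}$. Since the global dimension of a finite direct product of rings is the maximum of the global dimensions of its factors, I conclude that $\O$ has finite global dimension, bounded by $\dim\mcl{H}$.

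The step requiring the most care is this product decomposition of the completion, and in particular the verification that $V(I)$ is a finite set of closed points; this is exactly where finiteness of $G$ enters, and it is what reduces the problem from completion along the (non-maximal) ideal $I$ to the well-understood completions at maximal ideals. A mild additional subtlety, harmless here, is that over a non-algebraically-closed $k$ the residue fields $\O(\mcl{H})/m_i$ may be proper finite extensions of $k$ (and $r$ may be $1$ when $G$ is connected); but regularity and the resulting global-dimension bound are insensitive to this.
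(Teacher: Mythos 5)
Your proof is correct, and since the paper offers no inline argument for this lemma (it simply cites \cite[Lemma 2.10]{negronpevtsova}), a self-contained proof is exactly what is needed here. The route you take---finiteness of $G$ makes $V(I)$ a finite set of closed points, so the completion decomposes as a finite product of complete regular local rings $\widehat{\O(\mcl{H})_{m_i}}$, each of finite global dimension by Serre's theorem, with Noetherianity coming for free from completion of a Noetherian ring---is the standard semi-local completion argument and is essentially the argument carried out in the cited reference.
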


Note that the ideal of definition for $G$ is the ideal $\mfk{m}\O(G)$, where $\mfk{m}\subset \O(\mcl{H}/G)$ is associated to the closed point $1\in \mcl{H}/G$.

\begin{proposition}\label{prop:eq_def_seq}
Consider a closed embedding $G\to \mcl{H}$ of a finite group scheme into a smooth algebraic group $\mcl{H}$.  Take $\O=\hat{\O}_{\mcl{H}}$ and $Z=\hat{\O}_{\mcl{H}/G}$, where we complete at the augmentation ideal $\mfk{m}$ in $\O(\mcl{H}/G)$.  Then
\begin{enumerate}
\item[(a)] the quotients $\O(\mcl{H}/G)/\mfk{m}^n$ and $\O(\mcl{H})/\mfk{m}^n\O(\mcl{H})$ inherit $G$-algebra structures from $\O(\mcl{H}/G)$ and $\O(\mcl{H})$ respectively.
\item[(b)] The completions $Z$ and $\O$ inherit unique continuous $G$-actions so that the inclusions $\O(\mcl{H}/G)\to Z$ and $\O(\mcl{H})\to \O$ are $G$-linear.
\item[(c)] Under the actions of {\rm (b)}, the projection $\O\to \O(G)$ is an equivariant deformation of $\O(G)$ parametrized by $\Spf(Z)=(\mcl{H}/G)^{\wedge}_1$.
\end{enumerate}
\end{proposition}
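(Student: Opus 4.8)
The plan is to reduce every equivariance claim to a single geometric observation: the base point $1\in\mcl{H}/G$ is a fixed point for the translation action of $G$. Indeed, the composite $G\hookrightarrow\mcl{H}\to\mcl{H}/G$ is by construction the constant map to $1$ (the fiber of the quotient over $1$ is exactly the orbit $G\cdot e=G$), so the orbit map of $1$ is constant and $1$ is $G$-fixed. Consequently the augmentation $\O(\mcl{H}/G)\to k$ at $1$ is a map of $G$-algebras with $k$ carrying the trivial action, so its kernel $\mfk{m}$ is a $G$-stable ideal. Since $G$ acts by algebra automorphisms each power $\mfk{m}^n$ is $G$-stable, and because the inclusion $\O(\mcl{H}/G)\hookrightarrow\O(\mcl{H})$ is $G$-linear (translation on the source, adjoint on the target) the extended ideals $\mfk{m}^n\O(\mcl{H})$ are stable for the adjoint action as well. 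This yields part (a): the quotients $\O(\mcl{H}/G)/\mfk{m}^n$ and $\O(\mcl{H})/\mfk{m}^n\O(\mcl{H})$ are finite-dimensional $G$-algebras, and the transition maps in $n$ are $G$-algebra maps.

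For (b) I would simply pass to the inverse limit. Both completions $Z=\varprojlim_n\O(\mcl{H}/G)/\mfk{m}^n$ and $\O=\varprojlim_n\O(\mcl{H})/\mfk{m}^n\O(\mcl{H})$ are then inverse limits of the finite-dimensional $G$-algebras of part (a) along $G$-equivariant transition maps, and so inherit continuous $G$-actions, i.e.\ actions of the finite-dimensional algebra $kG$ compatible with the $\mfk{m}$-adic topology (equivalently, pro-objects in $\rep(G)$). The inclusions $\O(\mcl{H}/G)\to Z$ and $\O(\mcl{H})\to\O$ are $G$-linear because they are limits of the $G$-equivariant quotient maps. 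Uniqueness of the continuous extension is then automatic, since $\O(\mcl{H}/G)$ (resp.\ $\O(\mcl{H})$) is dense in its completion and a continuous $kG$-action is determined by its restriction to a dense subalgebra.

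For (c) I would check the three defining properties of an equivariant deformation in turn. Flatness: by the finiteness theorem recalled above (see \cite[Proposition 5.25 and Corollary 5.26]{milne17}) $\O(\mcl{H})$ is finite and flat over $\O(\mcl{H}/G)$; since it is module-finite the $\mfk{m}$-adic and $\mfk{m}\O(\mcl{H})$-adic completions agree and $\O=\O(\mcl{H})\ot_{\O(\mcl{H}/G)}Z$, which is finite and flat, hence free, over the Noetherian local ring $Z$. The reduction: as $\mfk{m}\O(\mcl{H})$ is precisely the ideal of definition of the closed subscheme $G\subset\mcl{H}$, one has $\O(\mcl{H})/\mfk{m}\O(\mcl{H})=\O(G)$ and hence $k\ot_Z\O=\O/\mfk{m}\O=\O(G)$. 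Equivariance: the map $Z\to\O$ is $G$-linear by (b); the augmentation $Z\to k$ is $G$-linear with trivial target action because $\mfk{m}$ is $G$-stable and $1$ is fixed; and the reduction $\O\to\O(G)$ is $G$-linear because $\mfk{m}\O$ is stable for the adjoint action, the induced action on the quotient $\O(G)$ being exactly the adjoint (conjugation) action of $G$ on itself, which is the action used to form $D(G)$. This exhibits $\O\to\O(G)$ as an equivariant deformation parametrized by $\Spf(Z)=(\mcl{H}/G)^\wedge_1$.

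The conceptual crux is the fixed-point observation, which is what makes $\mfk{m}$ and all of its powers and extensions $G$-stable and thereby feeds every equivariance statement; once it is in hand, part (a) and the equivariance halves of (b) and (c) are essentially formal. The remaining work is bookkeeping around completion: one must confirm that completing along $\mfk{m}$ is compatible with the finite flat module structure (so that $\O$ is free over $Z$ and $k\ot_Z\O$ really computes $\O/\mfk{m}\O$), and that the resulting actions on the completions are genuinely continuous in the pro-finite-dimensional sense rather than merely $kG$-linear on the underlying vector space. I expect the flatness-after-completion step and the clean identification $\O/\mfk{m}\O\cong\O(G)$ to be the most technical points, relying respectively on module-finiteness over $Z$ and on the identification of $\mfk{m}\O(\mcl{H})$ with the defining ideal of $G$.
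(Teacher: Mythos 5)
Your proof is correct, and its skeleton agrees with the paper's: everything reduces to the single fact that $\mfk{m}\subset\O(\mcl{H}/G)$ is stable under the translation action of $kG$, after which (a)--(c) are formal bookkeeping. Where you differ is in how that fact is established. You argue geometrically: $G$ is the scheme-theoretic fiber of $q:\mcl{H}\to\mcl{H}/G$ over $1$, so the orbit map $g\mapsto g\cdot 1$, being the composite $G\to\mcl{H}\overset{q}\to\mcl{H}/G$, factors through the point $1$; hence $1$ is a $G$-fixed point, the augmentation $\O(\mcl{H}/G)\to k$ is equivariant with trivial target action, and its kernel $\mfk{m}$ is stable. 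The paper explicitly acknowledges this route (``this is clear geometrically, and certainly well-known'') but instead gives a purely Hopf-algebraic argument ``for completeness'': the computation $(\epsilon\ot 1)\circ\rho_{\rm ad}(f)=\epsilon(f)=0$ shows that the augmentation ideal $\ker(\epsilon)\subset\O(\mcl{H})$ is stable under the adjoint coaction, and then $\mfk{m}=\ker(\epsilon)\cap\O(\mcl{H}/G)$ is an intersection of $G$-subrepresentations, hence stable. The trade-off is this: the paper's computation is self-contained at the level of coactions and needs nothing about the quotient beyond $G$-linearity of the inclusion $\O(\mcl{H}/G)\subset\O(\mcl{H})$, whereas your argument leans on the torsor property $q^{-1}(1)\cong G$; but since that identification is needed anyway in part (c) (it is exactly the statement $\O(\mcl{H})/\mfk{m}\O(\mcl{H})\cong\O(G)$), you incur no extra cost and obtain a shorter, more conceptual proof of the crux. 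Two smaller remarks: the details you supply for (b) and (c) (completions as inverse limits of the finite-dimensional $G$-algebras from (a), $\hat{M}\cong M\ot_A\hat{A}$ for finite modules over Noetherian rings, finite flat over a Noetherian local ring implies free) are correct and are precisely the content the paper compresses into ``all of (a)--(c) will follow''; and your phrase ``$G$ acts by algebra automorphisms'' should, for a group scheme, be read as the $kG$-module-algebra identity $x\cdot(ab)=(x_1\cdot a)(x_2\cdot b)$, which is what actually makes the powers $\mfk{m}^n$ and the extended ideals $\mfk{m}^n\O(\mcl{H})$ stable.
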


\begin{proof}
All of (a)--(c) will follow if we can simply show that $\mfk{m}\subset \O(\mcl{H}/G)$ is stable under the translation action of $kG$.  This is clear geometrically, and certainly well-known, but let us provide an argument for completeness.  If we let $\ker(\epsilon)\subset \O(\mcl{H})$ denote the augmentation ideal, we have $\mfk{m}=\ker(\epsilon)\cap\O(\mcl{H}/G)$.
\par

For the adjoint coaction $\rho_{\rm ad}:f\mapsto f_2\ot S(f_1)f_3$ of $\O(\mcl{H})$ on itself, and $f\in \ker(\epsilon)$, we have
\[
\begin{array}{l}
(\epsilon\ot 1)\circ\rho_{\rm ad}(f)=\epsilon(f_2)S(f_1)f_3\\
\hspace{2cm}=S(f_1)(\epsilon(f_2)f_3)=S(f_1)f_2=\epsilon(f)=0.
\end{array}
\]
So we see that under the adjoint coaction $\rho_{\rm ad}(\ker(\epsilon))\subset \ker(\epsilon)\ot \O(\mcl{H})$.  It follows that $\ker(\epsilon)$ is preserved under the adjoint coaction of $\O(G)$, and hence the adjoint action of $kG$, as well.  So, the intersection $\mfk{m}=\O(\mcl{H}/G)\cap \ker(\epsilon)$ is an intersection of $G$-subrepresentations in $\O(\mcl{H})$, and hence $\mfk{m}$ is stable under the action of $kG$.
\end{proof}

\section{Equivariant formality results and deformation classes}
\label{sect:equiv2}

We observe cohomological implications of the existence of a (smooth) equivariant deformation, for a given finite-dimensional $G$-algebra $R$.  The main results of this section can been seen as particular equivariantizations of~\cite[Theorem 1.2.3]{bezrukavnikovginzburg07} and~\cite[Corollary 4.7]{negronpevtsova}, as well as of classical results of Gulliksen~\cite[Theorem 3.1]{gulliksen74}.

\subsection{We fix an equivariant deformation}
\label{sect:fix}

We fix a $G$-equivariant deformation $Z\to Q\to R$, with $Z$ isomorphic to a power series in finitely many variables.  Fix also a choice of equivariant Koszul resolution
\[
\K:=\K_Z,\ \ \text{with}\ \epsilon:\K\overset{\sim}\to k\ \text{and}\ \pi:\K\to \Sym(\Sigma m_Z/m_Z^2).
\]
Recall the associated dg resolution $K_Q\overset{\sim}\to R$, with $\K_Q=Q\ot_Z\K$.  Via general phenomena (Section \ref{sect:dg_htop}) we observe

\begin{lemma}\label{lem:433}
Restriction provides a derived equivalence $D_{fin}(R)^G\overset{\sim}\to D_{coh}(\K_Q)^G$.
\end{lemma}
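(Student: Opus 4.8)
The plan is to recognize this as a direct instance of the general homotopy-isomorphism formalism of Section~\ref{sect:dg_htop}. First I would recall from Section~\ref{sect:KQ} that the projection $f:=id_Q\ot_Z\epsilon:\K_Q\to R$ is an equivariant dg algebra quasi-isomorphism, hence a one-step homotopy isomorphism of dg $G$-algebras. By the discussion surrounding \eqref{eq:211}, this quasi-isomorphism induces mutually inverse triangulated equivalences, namely base change $R\ot_{\K_Q}^{\rm L}-:D(\K_Q)^G\to D(R)^G$ and restriction $\res_f:D(R)^G\to D(\K_Q)^G$.

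Next I would pin down the finiteness conditions on either side. On cohomology $H^\ast(f):H^\ast(\K_Q)\to H^\ast(R)=R$ is an isomorphism of $G$-algebras, and since $R$ is finite-dimensional, so is $H^\ast(\K_Q)\cong R$. Consequently, for \emph{any} commutative Noetherian $G$-algebra $T$ with structure map $T\to H^\ast(\K_Q)$, a module $M\in D(\K_Q)^G$ has cohomology finitely generated over $T$ if and only if $H^\ast(M)$ is finitely generated over the finite-dimensional image of $T$ in $R$, i.e.\ if and only if $H^\ast(M)$ is finite-dimensional over $k$. Thus $D_{coh}(\K_Q)^G=D_{fin}(\K_Q)^G$ independently of the choice of $T$.

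Finally I would invoke the $T$-linear refinement from Section~\ref{sect:dg_htop} with $T=k$. Since $H^\ast(f)$ is trivially $k$-linear, base change restricts to an equivalence $D_{fin}(\K_Q)^G\overset{\sim}\to D_{fin}(R)^G$, and hence so does its inverse $\res_f$ in the opposite direction. Because restriction along $f$ leaves the underlying complex of a dg module unchanged, it manifestly preserves finite-dimensionality of total cohomology, which makes this last point transparent. Combining with the identification $D_{fin}(\K_Q)^G=D_{coh}(\K_Q)^G$ of the previous paragraph yields the asserted equivalence $\res_f:D_{fin}(R)^G\overset{\sim}\to D_{coh}(\K_Q)^G$.

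I expect no serious obstacle: the argument is essentially bookkeeping over the machinery already in place. The only point genuinely requiring attention is the identification of the coherent and finite subcategories over $\K_Q$, which rests entirely on the finite-dimensionality of $H^\ast(\K_Q)=R$; and one must be careful to take the equivalences in the correct direction, so that it is restriction, rather than base change, that realizes the map $D_{fin}(R)^G\to D_{coh}(\K_Q)^G$ in the statement.
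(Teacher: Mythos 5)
Your proposal is correct and takes essentially the same route as the paper, whose entire ``proof'' is the remark that the lemma follows via the general phenomena of Section~\ref{sect:dg_htop} applied to the equivariant quasi-isomorphism $\K_Q\overset{\sim}\to R$ of Section~\ref{sect:KQ}. The details you supply---that restriction along this one-step homotopy isomorphism is inverse to base change, and that $D_{coh}(\K_Q)^G=D_{fin}(\K_Q)^G$ because $H^\ast(\K_Q)\cong R$ is finite-dimensional---are precisely the bookkeeping the paper leaves implicit.
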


Following the notation of \cite{negronpevtsova}, we fix
\begin{equation}\label{eq:a_z}
A_Z:=\Sym(\Sigma^{-2}T_1\Spf(Z))=\Sym(\Sigma^{-2}(m_Z/m_Z^2)^\ast).
\end{equation}

\subsection{Equivariant formality and deformation classes}

\begin{lemma}\label{lem:formal}
Consider $\K$ the regular dg $\K$-bimodule.  There is a ($G$-)equivariant homotopy isomorphism
\[
\REnd_{\K\ot_Z \K}(\K)\overset{\sim}\to A_Z.
\]
In particular, $\REnd_{\K\ot_Z\K}(\K)$ is equivariantly formal.
\end{lemma}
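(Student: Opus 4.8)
The plan is to reduce the bimodule computation over $\K\otimes_Z\K$ to a one-sided $\Ext$ computation over an exterior algebra, where both the cohomology ring and the formality can be read off from Koszul duality. Set $\Lambda:=\Sym(\Sigma m_Z/m_Z^2)$, the exterior algebra on $m_Z/m_Z^2$, viewed as a dg $G$-algebra with vanishing differential, and recall that $A_Z=\Sym(\Sigma^{-2}(m_Z/m_Z^2)^\ast)$ is the free graded-commutative $G$-algebra on the degree $2$ space $\Sigma^{-2}(m_Z/m_Z^2)^\ast$.

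First I would exhibit an equivariant homotopy isomorphism $\K\otimes_Z\K\overset{\sim}\to\Lambda$ carrying the diagonal bimodule to the trivial module $k$. Since $\epsilon\colon\K\to k$ is an equivariant quasi-isomorphism and $\K$ is flat over $Z$, the functor $-\otimes_Z\K$ is exact and $\epsilon\otimes\mathrm{id}\colon\K\otimes_Z\K\to k\otimes_Z\K$ is an equivariant dg algebra quasi-isomorphism; composing with the algebra quasi-isomorphism $k\otimes_Z\K\overset{\sim}\to\Lambda$ furnished by $\pi$ in Lemma~\ref{lem:equiv_kos} gives the desired homotopy isomorphism of dg $G$-algebras. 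A direct computation then identifies the image of the diagonal bimodule under the induced equivalence $D(\K\otimes_Z\K)^G\overset{\sim}\to D(\Lambda)^G$: one has $(k\otimes_Z\K)\otimes_{\K\otimes_Z\K}\K\cong k$, because the kernel of the multiplication $\K\otimes_Z\K\to\K$ maps onto the augmentation ideal of $k\otimes_Z\K$. Invoking the compatibility of $\REnd$ with homotopy isomorphism (Section~\ref{sect:dg_htop} and Lemma~\ref{lem:well_def}), this reduces the lemma to constructing an equivariant homotopy isomorphism $A_Z\overset{\sim}\to\REnd_\Lambda(k)$.

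Next I would treat $\REnd_\Lambda(k)$ by Koszul duality. On cohomology, the duality between the exterior and symmetric algebras provides a $G$-equivariant isomorphism $\Ext^\ast_\Lambda(k,k)\cong A_Z$, each polynomial generator landing in total degree $2$ as the sum of one homological and one internal degree; this identification is functorial in the $G$-representation $m_Z/m_Z^2$ and hence equivariant. To lift it to a map of dg algebras I would work with the canonical Koszul (Priddy) resolution $P=\Lambda\otimes_k\Gamma\overset{\sim}\to k$, where $\Gamma=A_Z^\vee$ is the graded-dual divided power coalgebra of $A_Z$. The tautological action of $A_Z$ on its graded dual $\Gamma$ commutes with the resolution differential and so makes $P$ a dg $A_Z$-module; the resulting structure map is an equivariant dg algebra homomorphism $\phi\colon A_Z\to\End_\Lambda(P)=\REnd_\Lambda(k)$, which the cohomology computation above shows to be a quasi-isomorphism. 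As $A_Z$ carries the zero differential, $\phi$ is a single equivariant dg algebra quasi-isomorphism, establishing the equivariant formality of $\REnd_\Lambda(k)$, and with it of $\REnd_{\K\otimes_Z\K}(\K)$.

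The crux is the equivariance of this final lifting step. Over an imperfect field, or for $G$ that is not linearly reductive, the surjection of $G$-representations from the degree $2$ cocycles in $\REnd_\Lambda(k)$ onto $\Ext^2_\Lambda(k,k)$ need not split, so one cannot in general choose by hand an equivariant lift of the generating space $\Sigma^{-2}(m_Z/m_Z^2)^\ast$ to honest degree $2$ cocycles. The virtue of the canonical resolution $P$ is that the structure map $\phi$ is produced functorially out of the $G$-representation $m_Z/m_Z^2$, so that the required equivariant cocycle lifts exist automatically; this is exactly what circumvents the splitting obstruction. The technical points that remain---that $P$ is genuinely an equivariant semi-projective resolution in positive characteristic, which forces the use of divided powers in $\Gamma$ rather than a symmetric coalgebra, that the differential on $P$ is $A_Z$-linear, and that $\phi$ realizes the Koszul duality isomorphism---are routine once this divided-power model is fixed. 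I expect the positive-characteristic bookkeeping for $\Gamma$ and the $A_Z$-linearity of the differential on $P$ to be the most delicate part of the verification.
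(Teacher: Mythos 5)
Your proposal follows the paper's own proof essentially step for step: the paper likewise reduces along the equivariant quasi-isomorphism $\K\ot_Z\K\to \Sym(\Sigma m_Z/m_Z^2)$ (there written $\pi\ot_Z\epsilon$), identifies the diagonal bimodule with $k$ under the resulting mutually inverse base-change/restriction equivalences, and obtains the formality map as the tautological right $A_Z$-action on the resolution $B\ot A_Z^\ast$ of $k$ over $B=\Sym(\Sigma m_Z/m_Z^2)$ --- which is precisely your Priddy resolution $\Lambda\ot\Gamma$ with its divided-power graded dual, with equivariance coming for free from functoriality in $m_Z/m_Z^2$ exactly as you argue. The only differences are cosmetic (the order of the two reductions, and your underived justification that base change sends $\K$ to $k$, which the paper gets instead by noting that restriction sends $k$ to a bimodule quasi-isomorphic to $\K$ via $\epsilon$).
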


\begin{proof}
Consider our algebra $A=A_Z$ from \eqref{eq:a_z} and take $B=\Sym(\Sigma m_Z/m_Z^2)$.  Let $F\to k$ be the standard resolution of the trivial module over $B$.  The resolution $F$ is of the form $B\ot A^\ast$, as a graded space, with differential given by right multiplication by the identity element $\sum_i x_i\ot x^i$ in $B^{-1}\ot A^2$, and so $F$ admits a natural dg $(B,A)$-bimodule structure.  The action map for $A$ now provides an equivariant quasi-isomorphism $A\overset{\sim}\to \End_{B}(F)=\REnd_{B}(k)$.
\par

For the Koszul resolution $\K$ over $Z$, we have the equivariant quasi-isomorphism $\pi\ot_Z\epsilon:\K\ot_Z \K\overset{\sim}\to  B$ and corresponding restriction and base change equivalences $D(\K\ot_Z\K)^G\leftrightarrows D(B)^G$, which are mutually inverse.  Restriction sends the trivial representation $k$ over $B$ to the regular $\K$-bimodule $k\cong \mcl{K}$.  Hence the base change $B\ot^{\rm L}_{\K\ot_Z\K}\K$ is isomorphic to $k$.  We then get then an equivariant quasi-isomorphism
\[
B\ot^{\rm L}_{\K\ot_Z\K}-:\REnd_{\K\ot_Z\K}(\K)\overset{\sim}\to \REnd_{B}(B\ot^{\rm L}_{\K\ot_Z\K}\K),
\]
with the latter algebra homotopy isomorphic to $\REnd_{B}(k)\cong A$ by Lemma~\ref{lem:well_def}.
\end{proof}

\begin{remark}
In odd characteristic, one can replace the quasi-isomorphism $\pi\ot_Z\epsilon:\K\ot_Z\K\to B$ with the more symmetric map
\[
mult(\frac{1}{2}\pi\ot_Z\frac{-1}{2}\pi):\K\ot_Z\K\to B.
\]
The point is to provide an equivariant quasi-isomorphism which is a retract of the non-equivariant quasi-isomorphism $B\to \K\ot_Z\K$ implicit in~\cite[Lemma 2.4.2]{bezrukavnikovginzburg07}.
\end{remark}

Recall that we are considering an equivariant deformation $Q\to R$, with associated dg resolution $K_Q\overset{\sim}\to R$, as in Section \ref{sect:KQ}.  We have the natural action of $A_Z$ on $D_{coh}(K_Q)$ \cite[\S 3.4]{negronpevtsova}, which is expressed via the algebra map
\begin{equation}\label{eq:500}
A_Z=\End^\ast_{D(\K\ot_Z\K)}(\K)\to Z(D_{coh}(\K_Q))
\end{equation}
to the center of the derived category $Z(D_{coh}(\K_Q))=\oplus_i\Hom_{\text{Fun}}(id,\Sigma^i)$.  Specifically, for any endomorphism $f:\K\to \Sigma^n\K$ in the derived category of $Z$-central bimodules, and $M$ in $D_{coh}(K_Q)$, we have the induced endomorphism
\[
f\ot_{\K}^{\rm L}M:M\to \Sigma^n M.
\]
\par

Suppose, for convenience, that $Q$ is of finite global dimension.  We lift the maps
\begin{equation}\label{eq:ast}
-\ot^{\rm L}_\K M:\End^\ast_{D(\K\ot_Z\K)}(\K)\to \End^\ast_{D(\K_Q)}(M)
\end{equation}
to a dg level, for \emph{equivariant} $M$, as follows \cite{bezrukavnikovginzburg07}: Fix an equivariant semi-projective resolution $F\to \K$ over $\K\ot_Z\K$ and, at each $M$, chose an equivariant quasi-isomorphism $M'\to M$ from a dg $\K_Q$-module which is bounded and projective over $Q$ in each degree.  (Such a resolution exists since $Q$ is of finite global dimension.)  Then $F\ot_{\K}M'\to M$ is an equivariant semi-projective resolution of $M$ over $\K_Q$ \cite[Lemma 4.4]{negronpevtsova}.  We now have the lift
\[
-\ot_\K M':\End_{\K\ot_Z\K}(F)\to \End_{\K_Q}(F\ot_\K M')
\]
of \eqref{eq:ast}, and we write this lift simply as
\[
\mfk{def}^G_M:\REnd_{\K\ot_Z\K}(\K)\to \REnd_{\K_Q}(M).
\]
Direct calculation verifies that $\mfk{def}^G_M$, constructed in this manner, is in fact $G$-linear.  The following result is an equivariantization of~\cite[Corollary 4.7]{negronpevtsova}.

\begin{theorem}\label{thm:equiv_fg}
Consider a $G$-equivariant deformation $Q\to R$, with $R$ finite-dimensional, $Q$ of finite global dimension, and parametrization algebra $Z$ isomorphic to a power series in finitely many variables.  Let $\msc{R}$ denote the formal dg algebra $\REnd_{\K\ot_Z\K}(\K)$ (Lemma~\ref{lem:formal}).
\par

For any $M$ in $D_{coh}(\K_Q)^G$, the equivariant dg algebra map $\mfk{def}^G_M:\msc{R}\to \REnd_{\K_Q}(M)$ defined above has the following properties:
\begin{enumerate}
\item The induced map on cohomology $H^\ast(\mfk{def}^G_M):A_Z\to \End^\ast_{D(\K_Q)}(M)$ is a finite morphism of graded $G$-algebras.
\item For any $N$ in $D_{coh}(\K_Q)^G$, the induced action of $\msc{R}$ on $\RHom_{\K_Q}(M,N)$ is such that
\[
\RHom_{\K_Q}(M,N)\in D_{coh}(\msc{R})^G.
\]
\end{enumerate}
\end{theorem}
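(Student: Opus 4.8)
The plan is to observe that both assertions are, at their core, \emph{non-equivariant} finiteness statements to which the group action is merely appended, and then to import the relevant finiteness from complete-intersection theory while checking that the $G$-structures survive. Concretely, assertion (1) asks that $\End^\ast_{D(\K_Q)}(M)$ be module-finite over the image of $A_Z$, while assertion (2) asks that $\Ext^\ast_{\K_Q}(M,N)=H^\ast\big(\RHom_{\K_Q}(M,N)\big)$ be a finitely generated $A_Z$-module; the graded $G$-algebra map in (1) and the $G$-equivariant dg $\msc{R}$-module structure in (2) will both be automatic from the construction of $\mfk{def}^G_M$.

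First I would pin down the module structures and their cohomological shadows. By construction $\mfk{def}^G_M$ lifts the central action map \eqref{eq:ast}, so on cohomology $H^\ast(\mfk{def}^G_M)$ is precisely the action of $A_Z=\End^\ast_{D(\K\ot_Z\K)}(\K)$ on $\End^\ast_{D(\K_Q)}(M)$ coming from \eqref{eq:500}. For (2), the dg $\msc{R}$-module structure on $\RHom_{\K_Q}(M,N)$ arises from composition with $\REnd_{\K_Q}(M)$ pulled back along $\mfk{def}^G_M$; since $\RHom_{\K_Q}$ takes values in $\operatorname{Ch}(\Rep(G))$ and $\mfk{def}^G_M$ is $G$-linear, this is a $G$-equivariant dg $\msc{R}$-module, and on cohomology it recovers the $A_Z=H^\ast(\msc{R})$-action.

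The analytic heart is the finiteness of $\Ext^\ast_{\K_Q}(M,N)$ over $A_Z$, which I would obtain by forgetting the $G$-action and invoking the complete-intersection finite generation result. After forgetting $G$, the deformation $Q\to R$ exhibits $R$ as the fiber $k\ot_Z Q$ of a flat family over the regular local ring $Z$, so that $R$ is cut out of $Q$ by the regular system of parameters generating $m_Z$, and $A_Z=\Sym(\Sigma^{-2}(m_Z/m_Z^2)^\ast)$ is exactly the polynomial ring of degree-two Eisenbud/Gulliksen cohomology operators. The desired finiteness is then \cite[Corollary 4.7]{negronpevtsova} (equivalently the dg complete-intersection statements of \cite[Theorem 1.2.3]{bezrukavnikovginzburg07} and \cite[Theorem 3.1]{gulliksen74}), applied to the pair $M,N\in D_{coh}(\K_Q)$. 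Taking $N=M$ yields the module-finiteness needed for (1), and a general pair yields (2).

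Finally I would reinstate the equivariant structure. Since $\mfk{def}^G_M$ is $G$-linear, $H^\ast(\mfk{def}^G_M)\colon A_Z\to \End^\ast_{D(\K_Q)}(M)$ is a morphism of graded $G$-algebras, and it is finite by the previous step, proving (1). For (2), $\RHom_{\K_Q}(M,N)$ is a $G$-equivariant dg $\msc{R}$-module whose cohomology is finitely generated over $A_Z=H^\ast(\msc{R})$, which is exactly the statement $\RHom_{\K_Q}(M,N)\in D_{coh}(\msc{R})^G$. The main obstacle I anticipate is bookkeeping rather than conceptual: one must verify carefully that the $A_Z$-action produced by the dg-level map $\mfk{def}^G_M$ coincides with the cohomology-operator action used in the cited finiteness theorems, so that the finiteness genuinely transfers, and confirm that $D_{coh}$ here is defined through $A_Z$-finiteness of cohomology. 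The equivariance itself introduces no new difficulty, precisely because module-finiteness is insensitive to the $G$-action.
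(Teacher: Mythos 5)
Your proposal is correct and follows essentially the same route as the paper's own proof: forget the $G$-action, identify $H^\ast(\mfk{def}^G_M)$ with the central action map \eqref{eq:500}, and invoke \cite[Corollary 4.7]{negronpevtsova} for both finiteness claims, with equivariance coming for free from the $G$-linearity of the construction. The one item you flag as remaining ``bookkeeping''---that the dg-level action coincides with the cohomology-operator action in the cited theorems---is exactly what the paper checks, by factoring $-\ot^{\rm L}_{\K}M$ through the bimodule category $D(\K_Q\ot\K_Q)$ to match the map to the center with that of \cite[(3.1.5)]{bezrukavnikovginzburg07} and \cite[Section 3.4]{negronpevtsova}.
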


By $D_{coh}(\msc{R})^G$ we mean the category of $G$-equivariant dg modules over $\msc{R}$ with finitely generated cohomology over $A_Z=H^\ast(\msc{R})$.  

\begin{proof}
The map $\mfk{def}^G_M$ was already constructed above.  We just need to verify the implications for cohomology, which actually have nothing to do with the $G$-action.  We note that the cohomology $H^\ast(\mfk{def}^G_M)$ is, by construction, obtained by evaluating the functor
\[
-\ot^{\rm L}_{\K}M:D(\K\ot_Z\K)\to D(\K_Q)
\]
at the object $\K$.  (Again, we forget about equivariance here.)  We can factor this functor through the category of $\K_Q$-bimodules
\[
D(\K\ot_Z\K)\overset{-\ot_Z^{\rm L}Q}\longrightarrow D(K_Q\ot K_Q)\overset{-\ot^{\rm{L}}_{\K_Q}M}\longrightarrow D(\K_Q)
\]
to see that the corresponding map to the center~\eqref{eq:500} agrees with that of\ \cite[(3.1.5)]{bezrukavnikovginzburg07} \cite[Section 3.4]{negronpevtsova}.  So the finiteness claims of (1) and (2) follow from~\cite[Corollary 4.7]{negronpevtsova}.
\end{proof}

Via Lemma~\ref{lem:formal} we may replace $D(\msc{R})^G$ with $D(A_Z)^G$, and view $\RHom_{K_Q}$, or equivalently $\RHom_R$, as a functor to $D(A_Z)^G$.  Alternatively, we could work with the dg scheme (shifted affine space) $\mcl{T}^\ast=T_1^\ast\Spf(Z)=\Spec(A_Z)$, and view $\RHom_R$ as a functor taking values in the derived category of equivariant dg sheaves on $\mcl{T}^\ast$.
\par

From this perspective, Theorem \ref{thm:equiv_fg} tells us that $\RHom_R$ has image in the subcategory of dg sheaves on $\mcl{T}^\ast$ with coherent cohomology,
\[
\RHom_R:(D_{fin}(R)^G)^{op}\times D_{fin}(R)^G\to D_{coh}(A_Z)^G\cong D_{coh}(\mcl{T}^\ast)^G.
\]

\begin{remark}
We only use the finiteness claims of Theorem~\ref{thm:equiv_fg} in the case in which all of $Z$, $Q$, and $R$ are commutative.  In this case in particular, claims (1) and (2) of Theorem~\ref{thm:equiv_fg} should be obtainable directly from Gulliksen~\cite[Theorem 3.1]{gulliksen74}.
\end{remark}

\begin{remark}
One may compare the above analyses with the formality arguments of~\cite[Sections 5.4--5.8]{arkhipovbezrukavnikovginzburg04}.
\end{remark}

\section{Touz\'{e}-Van der Kallen and derived invariants}

We recall some results of Touz\'{e} and Van der Kallen~\cite{touzevanderkallen10}.  Our aim is to take derived invariants of Theorem~\ref{thm:equiv_fg} to obtain a finite generation result for equivariant extensions $\Hom_{D(R)^G}^\ast$.  We successfully realize this aim via an invocation of \cite{touzevanderkallen10}.  Throughout this section $G$ is a \emph{finite} group scheme.

\subsection{Basics and notations}

For $V$ any $G$-representation we have the standard group cohomology $H^\ast(G,V)=\Ext^\ast_G(\1,V)$.  For more general objects in $D(\Rep(G))$ we adopt a hypercohomological notation.

\begin{notation}
We let $(-)^{\RG}:D(\Rep(G))\to D(Vect)$ denote the derived invariants functor, $(-)^{\RG}=\RHom_G(\1,-)$.  For $M$ in $D(\Rep(G))$ we take
\[
\mbb{H}^\ast(G,M):=H^\ast(M^{\RG}).
\]
\end{notation}

We note that the hypercohomology $\mbb{H}^\ast(G,M)$ is still identified with morphisms $\Hom^\ast_{D(\Rep(G))}(\1,M)$ in the derived category.  Since $G$ is assumed to be finite, we are free to employ an explicit identification
\[
(-)^{\RG}=\Hom_G(Bar_G,-),
\]
where $Bar_G$ is the standard Bar resolution.  For any dg $G$-algebra $S$ the derived invariants $S^{\RG}$ are naturally a dg algebra in $Vect$, and for any equivariant dg $S$-module $M$, $M^{\RG}$ is a dg module over $S^{\RG}$.  (Under our explicit expression of derived invariants in terms of the bar resolution, these multiplicative structures are induced by a dg coalgebra structure on $Bar_G$, see e.g.\ \cite[\S 2.2]{sanada93}.)  We therefore obtain at any dg $G$-algebra a functor
\begin{equation}\label{eq:RG}
(-)^{\RG}:D(S)^G\to D(S^{\RG}).
\end{equation}
\par

The following well-known fact can be proved by considering the hypercohomology $\mbb{H}^\ast(G,S)$ as maps $\1\to \Sigma^nS$ in the derived category. 

\begin{lemma}
If $A$ is a commutative dg $G$-algebra, then the hypercohomology $\mathbb{H}^\ast(G,A)$ is a also commutative.
\end{lemma}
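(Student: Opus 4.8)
The plan is to work entirely inside the derived category, using the identification $\mbb{H}^\ast(G,A)=\Hom^\ast_{D(\Rep(G))}(\1,A)$ recorded in the statement, and to deduce graded commutativity of the product from the symmetric monoidal structure on $D(\Rep(G))$ together with the commutativity of $A$. First I would fix the product on $\mbb{H}^\ast(G,A)$: for classes $\alpha:\1\to\Sigma^m A$ and $\beta:\1\to\Sigma^n A$, the product $\alpha\beta$ is the composite
\[
\1\overset{\sim}\to\1\ot\1\overset{\alpha\ot\beta}\to\Sigma^m A\ot\Sigma^n A\overset{\mu_{m,n}}\to\Sigma^{m+n}A,
\]
where $\mu_{m,n}$ denotes the suitably shifted multiplication on $A$ and the first arrow is the inverse unit isomorphism. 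Both this composite and the dg product induced by the coalgebra structure on $Bar_G$ realize the standard cup/Yoneda product, so they agree; alternatively one takes the composite as the definition and notes that the two constructions compute the same ring.

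The essential input is that $kG$ is \emph{cocommutative}, so that $\Rep(G)$ is a symmetric monoidal category, and this symmetry descends to a symmetry $c$ on $D(\Rep(G))$ compatible with the derived tensor product and the suspension $\Sigma$. By definition, commutativity of $A$ as a dg algebra is the statement $\mu\circ c_{A,A}=\mu$. I would then compute $\beta\alpha$ using naturality of $c$ applied to the pair $(\alpha,\beta)$, namely $c_{\Sigma^m A,\Sigma^n A}\circ(\alpha\ot\beta)=(\beta\ot\alpha)\circ c_{\1,\1}$, together with the fact that the symmetry on the unit $c_{\1,\1}$ is the identity. Combining this with the Koszul sign rule $\mu_{n,m}\circ c_{\Sigma^m A,\Sigma^n A}=(-1)^{mn}\,\mu_{m,n}$ (which follows from $\mu\circ c_{A,A}=\mu$ after commuting the shifts past $A$) gives
\[
\beta\alpha=\mu_{n,m}\circ(\beta\ot\alpha)=\mu_{n,m}\circ c_{\Sigma^m A,\Sigma^n A}\circ(\alpha\ot\beta)=(-1)^{mn}\,\mu_{m,n}\circ(\alpha\ot\beta)=(-1)^{mn}\alpha\beta,
\]
which is precisely graded commutativity of $\mbb{H}^\ast(G,A)$. (I suppress the unit isomorphisms, which cancel since $c_{\1,\1}=\mathrm{id}$.)

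The main obstacle I expect is bookkeeping rather than conceptual: one must fix sign conventions so that the symmetry $c$ on $D(\Rep(G))$ interacts correctly with the suspension, and verify that the Koszul sign $(-1)^{mn}$ is the \emph{only} discrepancy between $c_{\Sigma^m A,\Sigma^n A}$ and the unshifted symmetry $c_{A,A}$ after the coherent identifications $\Sigma^m A\ot\Sigma^n A\cong\Sigma^{m+n}(A\ot A)\cong\Sigma^n A\ot\Sigma^m A$. A secondary point requiring care is the compatibility of the abstract derived-category product with the explicit bar-resolution product referenced in the text; this can be handled either by checking that both are computed by the lax symmetric monoidal structure of $(-)^{\RG}=\RHom_G(\1,-)$, or, more efficiently, by phrasing the entire argument as the assertion that the symmetric monoidal functor $(-)^{\RG}$ carries the commutative algebra object $A$ to a commutative algebra object in $D(Vect)$, which is an equivalent formulation of the claim.
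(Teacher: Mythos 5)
Your proposal is correct and follows essentially the same route as the paper's own argument: the paper likewise represents classes of $\mbb{H}^\ast(G,A)$ as morphisms $\1\to\Sigma^n A$ in $D(\Rep(G))$, defines the product via $\mathrm{mult}\circ(\alpha\ot\beta)$, and invokes the symmetric braiding together with commutativity of $A$ to obtain graded commutativity up to sign. Your additional care with the Koszul signs and the comparison with the bar-resolution product only makes explicit what the paper leaves implicit.
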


\subsection{Derived invariants and coherence of dg modules}

We have the following result of Touz\'{e} and Van der Kalen.

\begin{theorem}[{\cite[Theorems 1.4 \& 1.5]{touzevanderkallen10}}]\label{thm:tv1}
Consider $G$ a finite group scheme, and $A$ a commutative $G$-algebra which is of finite type over $k$.  Then the cohomology $H^\ast(G,A)$ is also of finite type, and for any finitely generated equivariant $A$-module $M$, the cohomology $H^\ast(G,M)$ is a finite module over $H^\ast(G,A)$.
\end{theorem}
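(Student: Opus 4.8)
The plan is to reduce the statement for an arbitrary finite group scheme $G$ to the corresponding finite generation statement for the general linear group $\operatorname{GL}_N$, i.e.\ to the \emph{reductive} case, and then to confront that reductive case directly. The reduction rests on the fact, recalled in the text, that any finite $G$ admits a closed embedding into some $\operatorname{GL}_N$, together with the affineness of the homogeneous space $\operatorname{GL}_N/G$ (a special case of the finiteness theorem quoted above).

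First I would set up a Shapiro-type comparison. Since $G$ is finite and $\operatorname{GL}_N$ is affine, the quotient $\operatorname{GL}_N/G$ is affine, so the induction functor $\operatorname{ind}_G^{\operatorname{GL}_N}\colon \Rep(G)\to \Rep(\operatorname{GL}_N)$, right adjoint to the exact restriction functor, is itself exact and sends injectives to injectives. Taking an injective resolution of $M$ in $\Rep(G)$ and applying $\operatorname{ind}$ therefore produces an injective resolution in $\Rep(\operatorname{GL}_N)$, and the adjunction $\Hom_{\operatorname{GL}_N}(\1,\operatorname{ind}_G^{\operatorname{GL}_N}(-))=\Hom_G(\1,-)$ yields a natural isomorphism
\[
H^\ast(G,M)\cong H^\ast\big(\operatorname{GL}_N,\ \operatorname{ind}_G^{\operatorname{GL}_N}M\big).
\]
I would then verify that $\operatorname{ind}_G^{\operatorname{GL}_N}A=(\O(\operatorname{GL}_N)\ot A)^G$ is again a finitely generated \emph{commutative} $\operatorname{GL}_N$-algebra: the algebra $\O(\operatorname{GL}_N)\ot A$ is finitely generated, and the invariants of a finitely generated commutative algebra under a finite group scheme are finitely generated (one checks the algebra is integral, hence finite, over its invariant subalgebra and invokes Artin--Tate). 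Similarly a finitely generated equivariant $A$-module $M$ induces up to a finitely generated $\operatorname{ind}_G^{\operatorname{GL}_N}A$-module. With these identifications, both assertions of the theorem become instances of the analogous assertions for $\operatorname{GL}_N$.

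The core is therefore finite generation of $H^\ast(\operatorname{GL}_N,B)$ for a finitely generated commutative $\operatorname{GL}_N$-algebra $B$, and finiteness of $H^\ast(\operatorname{GL}_N,M)$ over it for finitely generated $B$-modules $M$. Here I would follow van der Kallen's strategy of isolating a \emph{formal} reduction of cohomological finite generation to the existence of suitable cohomology classes. In degree zero the invariants $B^{\operatorname{GL}_N}=H^0(\operatorname{GL}_N,B)$ are finitely generated by classical invariant theory (geometric/power reductivity). The passage to higher degrees requires a supply of non-nilpotent \emph{universal} classes: one constructs canonical classes in the cohomology of $\operatorname{GL}_N$ with coefficients in divided or symmetric powers of the Frobenius-twisted adjoint representation $\mfk{gl}_N$, lifting to the whole group the Friedlander--Suslin classes that detect the cohomology of the Frobenius kernels $\operatorname{GL}_{N(r)}$. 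A Noetherian bootstrap then exhibits $H^\ast(\operatorname{GL}_N,B)$ as a finite module over the finitely generated subalgebra generated by the invariants and these universal classes, giving both the algebra and module statements at once.

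The main obstacle is exactly the construction of these universal classes and the control of their restriction to Frobenius kernels. This is the genuinely hard, genuinely positive-characteristic input, and it is where the infinite-dimensionality of the coefficient algebra $B$ (as opposed to the finite-dimensional coefficients handled by Friedlander--Suslin) really bites: the classes must be produced with enough rigidity to govern polynomial growth in the coefficients, which is achieved by working in the category of strict polynomial (bi)functors à la Touz\'{e}. By contrast, the formal Noetherian reduction is comparatively soft once the classes and the finite generation of invariants are in hand. I expect that reproving the universal-class construction from scratch, rather than citing Touz\'{e}'s work, would account for essentially all of the effort.
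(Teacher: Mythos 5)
This statement is not actually proved in the paper: it is imported verbatim from Touz\'{e}--van der Kallen, and the bracketed citation \cite[Theorems 1.4 \& 1.5]{touzevanderkallen10} is the entirety of the paper's ``proof.'' So there is no internal argument to compare yours against; the relevant comparison is with the cited work itself, and your outline is a faithful reconstruction of how that result is established there. Your reduction layer is sound: for finite $G\subset \operatorname{GL}_N$ the quotient $\operatorname{GL}_N/G$ is affine, so $\operatorname{ind}_G^{\operatorname{GL}_N}$ is exact and preserves injectives, giving the multiplicative Shapiro isomorphism $H^\ast(G,A)\cong H^\ast(\operatorname{GL}_N,\operatorname{ind}_G^{\operatorname{GL}_N}A)$; and $(\O(\operatorname{GL}_N)\ot A)^G$ is again of finite type, since $\O(\operatorname{GL}_N)\ot A$ is integral over its $G$-invariants (via the connected--\'{e}tale decomposition of $G$: $p^r$-th powers are invariant under the infinitesimal part, and orbit/norm arguments plus descent handle the \'{e}tale part), so Artin--Tate applies; the module half of the statement follows the same way using Noetherianity of the invariant subalgebra. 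The honest caveat, which you state yourself, is that everything then hinges on cohomological finite generation for $\operatorname{GL}_N$ with finitely generated commutative coefficients --- Touz\'{e}'s universal classes in bifunctor cohomology and van der Kallen's Grosshans-filtration bootstrap --- which you invoke rather than prove. Your proposal is therefore a correct roadmap but not an independent proof: it occupies the same logical position as the paper's bare citation, just with the reduction-to-$\operatorname{GL}_N$ step made explicit, and the genuinely hard content remains exactly where you located it.
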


One can derive this results to obtain

\begin{theorem}\label{thm:tv2}
Consider $G$ a finite group scheme, and $S$ a dg $G$-algebra which is equivariantly formal and has commutative, finite type, cohomology.  Suppose additionally that the cohomology of $S$ is bounded below.  Then the derived invariants functor \eqref{eq:RG} restricts to a functor
\[
(-)^{\RG}:D_{coh}(S)^G\to D_{coh}(S^{\RG}).
\]
Equivalently, for any equivariant dg $S$-module $M$ with finitely generated cohomology over $H^\ast(S)$, the hypercohomology $\mbb{H}^\ast(G,M)$ is finite over $\mbb{H}^\ast(G,S)$.
\end{theorem}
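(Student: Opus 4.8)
The plan is to reduce to the case of a formal algebra and then derive Theorem~\ref{thm:tv1} through the hypercohomology spectral sequence. Since $S$ is equivariantly formal we may fix a homotopy isomorphism $S\to A$ onto $A:=H^\ast(S)$, regarded as a dg $G$-algebra with vanishing differential. By the discussion of Section~\ref{sect:dg_htop} this induces an equivalence $D_{coh}(S)^G\overset{\sim}\to D_{coh}(A)^G$. Moreover, each $G$-equivariant dg algebra quasi-isomorphism $\phi$ in the defining zig-zag induces a quasi-isomorphism $\phi^{\RG}$ on derived invariants, and for restriction and base change along $\phi$ the underlying complex of derived invariants is preserved up to quasi-isomorphism, the algebra action transporting along $\phi^{\RG}$. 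Hence $(-)^{\RG}$ intertwines the two situations, and it suffices to prove the statement for $S=A$ with zero differential, i.e.\ that $\mbb{H}^\ast(G,M)$ is finite over $\mbb{H}^\ast(G,A)$ whenever $H^\ast(M)$ is finitely generated over $A=H^\ast(A)$.

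For $S=A$, I would compute $M^{\RG}=\Hom_G(Bar_G,M)$ as the total complex of the double complex with bar degree $p$ and internal degree $q$, and use the resulting spectral sequence
\[
E_2^{p,q}=H^p(G,H^q(M))\ \Rightarrow\ \mbb{H}^{p+q}(G,M),
\]
which is a module over the analogous spectral sequence ${}'E_2^{p,q}=H^p(G,A^q)\Rightarrow \mbb{H}^{p+q}(G,A)$ for $A$, the multiplicative structure coming from the dg coalgebra $Bar_G$. Because $A$ has zero differential its double complex splits as a direct sum over $q$; hence the spectral sequence for $A$ degenerates, ${}'E_2\cong \mbb{H}^\ast(G,A)\cong H^\ast(G,A)$, and every class of ${}'E_2$ is a permanent cycle. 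Since $H^\ast(S)$ is bounded below, $H^\ast(M)$ is bounded below, so in each total degree only finitely many $(p,q)$ contribute and both spectral sequences converge.

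The input from Touz\'e--Van der Kallen enters on the $E_2$-page. Writing $A_{\mrm{ev}}$ for the subalgebra of $A$ in even internal degree, $A_{\mrm{ev}}$ is a genuine commutative finite-type $G$-algebra over which $A$, and hence the finitely generated module $H^\ast(M)$, is module-finite (in characteristic $2$ one applies the following to $A$ itself). Theorem~\ref{thm:tv1} then shows that $H^\ast(G,A)={}'E_2$ is a Noetherian ring and that $E_2=H^\ast(G,H^\ast(M))$ is a finite module over it. The remaining task is to propagate this finiteness to the abutment, and here the degeneracy for $A$ is decisive: since the classes of ${}'E_2$ are permanent cycles, the Leibniz rule forces every differential $d_r$ of the module spectral sequence to be ${}'E_2$-linear. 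Consequently the standard cycle and boundary submodules $\bar Z_r,\bar B_r\subseteq E_2$ are ${}'E_2$-submodules; as ${}'E_2$ is Noetherian and $E_2$ is finite over it, $E_2$ is a Noetherian ${}'E_2$-module, so the increasing chain $\bar B_r$ stabilizes and $\bar Z_\infty=\bigcap_r\bar Z_r$ is finitely generated, whence $E_\infty=\bar Z_\infty/\bar B_\infty$ is again a finite ${}'E_2$-module.

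Finally I would lift finiteness from the associated graded to $\mbb{H}^\ast(G,M)$: choosing finitely many classes of $\mbb{H}^\ast(G,M)$ whose images generate $E_\infty$ over ${}'E_2\cong \mbb{H}^\ast(G,A)$, a standard induction along the filtration, which is finite in each total degree, shows these classes generate $\mbb{H}^\ast(G,M)$ as a module over $\mbb{H}^\ast(G,A)$. The main obstacle is precisely this propagation step: because $G$ is a finite group scheme its cohomological dimension is typically infinite, so the spectral sequence does not degenerate at any fixed page and one cannot read off the abutment from a single $E_r$. What saves the argument is that formality of $A$ makes all module differentials linear over the Noetherian ring ${}'E_2$, which lets one control the infinite limit via the ascending and descending chain conditions in place of a uniform degeneration.
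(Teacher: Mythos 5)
Your proposal is correct and follows essentially the same route as the paper's proof: reduce to the formal case $A=H^\ast(S)$, feed Theorem~\ref{thm:tv1} through the even (or invariant) subalgebra of $A$, and propagate finiteness through the bar-filtration spectral sequence $H^\ast(G,H^\ast(M))\Rightarrow\mbb{H}^\ast(G,M)$ using Noetherianity of the base ring and the degreewise-bounded filtration, exactly as in the paper's appeal to \cite[Lemma 1.6]{friedlandersuslin97}. The only real difference is cosmetic: where you argue that degeneration of the ring spectral sequence plus the Leibniz rule makes all differentials linear over the full ring ${}'E_2=H^\ast(G,A)$, the paper sidesteps this discussion by restricting scalars to the subring $H^\ast(G,A^G)=H^\ast(G,\1)\ot A^G$ (over which the $E_2$-page is still finite, since $A$ is finite over $A^G$ for finite $G$), whose action on the whole spectral sequence and its abutment is transparent.
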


\begin{proof}
Take $A=H^\ast(S)$.  We are free to view, momentarily, $A$ as a non-dg object.  We have that $A$ is finite over its even subalgebra $A^{ev}$, which is a commutative algebra in the classical sense, so that Theorem \ref{thm:tv1} implies that cohomology $H^\ast(G,-)$ sends $A$ to a finite extension of $H^\ast(G,A^{ev})$, and any finitely generated $A$-module to a finitely generated $H^\ast(G,A^{ev})$-module.  Hence $H^\ast(G,A)$ is of finite type over $k$, and $H^\ast(G,N)$ is finite over $H^\ast(G,A)$ for any finitely generated, equivariant, non-dg, $A$-module $N$.
\par

Since $G$ is a finite group scheme, $A$ is also a finite module over its (usual) invariant subalgebra $A^G$, and any $A$-module is finitely generated over $A$ if and only if it is finitely generated over $A^G$.  Theorem \ref{thm:tv1} then tells us that, for any finitely generated $A$-module $N$, the cohomology $H^\ast(G,N)$ is finitely generated over $H^\ast(G,A^G)=H^\ast(G,\1)\ot A^G$, where $H^\ast(G,A^G)$ acts through the algebra map
\[
H^\ast(G,{\rm incl}):H^\ast(G,A^G)\to H^\ast(G,A).
\]
\par

Consider now any dg module $M$ in $D_{coh}(S)^G$.  Formality implies an algebra isomorphism $S\cong A$ in $D(\Rep(G))$ and so identifies $\mbb{H}^\ast(G,S)$ with $\mbb{H}^\ast(G,A)=H^\ast(G,A)$.  We want to show that, for such a dg module $M$, the hypercohomology $\mbb{H}^\ast(G,M)$ is a finitely generated module over $\mbb{H}^\ast(G,S)\cong H^\ast(G,A)$.  It suffices to show that $\mbb{H}^\ast(G,M)$ is finite over $H^\ast(G,A^G)=H^\ast(G,\1)\ot A^G$.  We have the first quadrant spectral sequence (via our bounded below assumption)
\[
E_2^{\ast,\ast}=H^\ast(G,H^\ast(M))\ \Rightarrow\ \mbb{H}^\ast(G,M),
\]
and the $E_2$-page is finite over $H^\ast(G,A^G)$ by the arguments given above.  Since $H^\ast(G,A^G)$ is Noetherian, it follows that the associated graded module $E_\infty^{\ast,\ast}=\operatorname{gr}\mbb{H}^\ast(G,M)$ is finite over $H^\ast(G,A^G)$, and since the filtration on $\mbb{H}^\ast(G,M)$ is bounded in each cohomological degree it follows that the hypercohomology $\mbb{H}^\ast(G,M)$ is indeed finite over $H^\ast(G,A^G)\subset \mbb{H}^\ast(G,S)$~\cite[Lemma 1.6]{friedlandersuslin97}.
\end{proof}

\section{Finite generation of cohomology for Drinfeld doubles}

Consider $G$ a finite group scheme.  Fix a closed embedding $G\to \mcl{H}$ into a smooth algebraic group $\mcl{H}$, and fix also the associated $G$-equivariant deformation
\[
Z\to \O\to \O(G),\ \ Z=\hat{\O}_{\mcl{H}/G},\ \O=\hat{\O}_{\mcl{H}},
\]
as in Section~\ref{sect:embed_def}.  Here $kG$ acts on $\O(G)$ and $\O$ via the adjoint action, and this adjoint action restricts to a translation action on $Z$.  We recall that the embedding dimension of $G$ is the minimal dimension of such smooth $\mcl{H}$ admitting a closed embedding $G\to \mcl{H}$.
\par

We consider the tensor category
\[
Z(\rep(G))\cong \rep(D(G))\cong \Coh(G)^G
\]
of representations over the Drinfeld double of $G$, aka the Drinfeld center of $\rep(G)$.  We prove the following below.

\begin{theorem}\label{thm:ZG}
For any finite group scheme $G$, the Drinfeld center $Z(\rep(G))$ is of finite type and of bounded Krull dimension
\[
\Kdim Z(\rep(G))\leq \Kdim\rep(G)+\operatorname{embed.dim}(G).
\]
\end{theorem}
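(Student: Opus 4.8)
The plan is to compute the equivariant extensions in $\Coh(G)^G$ as derived $G$-invariants of sheaf-level $\RHom$, and then to feed these through the two finiteness engines established above. Write $R=\O(G)$, viewed as a finite-dimensional $G$-algebra under the adjoint action, and recall from Section~\ref{sect:embed_def} the equivariant deformation $Z\to\O\to R$ attached to a closed embedding $G\to\mcl{H}$ into a smooth group of dimension $n$; here $Q=\O$ has finite global dimension and $Z\cong k\b{x_1,\dots,x_n}$, so that $A_Z=\Sym(\Sigma^{-2}(m_Z/m_Z^2)^\ast)$ is a polynomial algebra on $n$ generators in cohomological degree $2$, carrying the (generally nontrivial) $G$-action induced by translation on the cotangent space $T_1\Spf(Z)$. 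The unit $\1$ of $\Coh(G)^G=\rep(D(G))$ is the augmentation module $k=R/m_R$, the skyscraper at the identity with its canonical equivariant structure. The first step is the standard identification, for any $V\in\rep(D(G))$,
\[
\Ext^\ast_{D(G)}(\1,V)\;=\;\mbb{H}^\ast\!\big(G,\RHom_R(\1,V)\big),
\]
expressing equivariant Ext on the affine scheme $G$ as the derived $G$-invariants of the complex $\RHom_R(\1,V)$ of $G$-representations; one verifies that this respects the evident algebra and module structures.

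Next I would run the two finiteness theorems in sequence. Since $\1$ and $V$ are finite-dimensional they lie in $D_{fin}(R)^G$, so Theorem~\ref{thm:equiv_fg}, transported along the equivalence of Lemma~\ref{lem:433}, places $\RHom_R(\1,V)$ in $D_{coh}(\msc{R})^G$, where $\msc{R}=\REnd_{\K\ot_Z\K}(\K)$ is equivariantly formal with cohomology $A_Z$ by Lemma~\ref{lem:formal}. Because $A_Z$ is commutative, of finite type, and bounded below, Theorem~\ref{thm:tv2} applies to $S=\msc{R}$ and shows that $\RHom_R(\1,V)^{\RG}$ lies in $D_{coh}(\msc{R}^{\RG})$; on cohomology this says exactly that $\Ext^\ast_{D(G)}(\1,V)$ is a finite module over $\mbb{H}^\ast(G,\msc{R})\cong H^\ast(G,A_Z)$. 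Taking $V=\1$ and invoking Theorem~\ref{thm:tv1} for the commutative $G$-algebra $A_Z$ shows that $H^\ast(G,A_Z)$ is itself finitely generated over $k$, whence $\Ext^\ast_{D(G)}(\1,\1)$ is a finitely generated $k$-algebra. As the $A_Z$-action on $\Ext^\ast_{D(G)}(\1,V)$ is induced through the $\1$-variable by $\mfk{def}^G_\1$, it factors through the resulting map $H^\ast(G,A_Z)\to\Ext^\ast_{D(G)}(\1,\1)$ and agrees with the Yoneda action; hence finiteness over $H^\ast(G,A_Z)$ upgrades to finiteness over $\Ext^\ast_{D(G)}(\1,\1)$, giving finite type.

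For the Krull dimension bound I would use that $G$ is finite. Then $A_Z$ is a finite module over its invariant subalgebra $(A_Z)^G$, on which $G$ acts trivially, so the argument in the proof of Theorem~\ref{thm:tv2} shows $H^\ast(G,A_Z)$ is finite over $H^\ast\!\big(G,(A_Z)^G\big)=H^\ast(G,k)\ot(A_Z)^G$. Consequently
\[
\Kdim\Ext^\ast_{D(G)}(\1,\1)\;\le\;\Kdim H^\ast(G,A_Z)\;\le\;\Kdim H^\ast(G,k)+\Kdim(A_Z)^G.
\]
Now $\Kdim H^\ast(G,k)=\Kdim\rep(G)$ by definition, while $(A_Z)^G$ has the same Krull dimension $n$ as the polynomial ring $A_Z$ (finiteness of $A_Z$ over $(A_Z)^G$). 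Choosing $\mcl{H}$ of minimal dimension $n=\operatorname{embed.dim}(G)$ then delivers the stated inequality.

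The genuine mathematical content is entirely packaged into Theorems~\ref{thm:equiv_fg} and~\ref{thm:tv2}, so the remaining obstacle is bookkeeping rather than new input: one must check that the opening identification respects multiplicative and module structures precisely enough that the $A_Z$-module structure produced by Theorem~\ref{thm:equiv_fg} is the very one to which Theorem~\ref{thm:tv2} is applied, and that both are compatible with the Yoneda action of $\Ext^\ast_{D(G)}(\1,\1)$ so that the module-finiteness transfers. The only mild subtlety in the dimension count is confirming $\Kdim(A_Z)^G=n$ and that $\ot_k$ adds Krull dimensions, both standard for finitely generated graded $k$-algebras.
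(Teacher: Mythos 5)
Your proposal is correct and follows essentially the same route as the paper's own proof: identify $\Ext^\ast_{Z(\rep(G))}(\1,V)$ with the derived invariants $\mbb{H}^\ast\big(G,\RHom_{\O(G)}(\1,V)\big)$, apply Theorem~\ref{thm:equiv_fg} (via $\mfk{def}^G_\1$) to land in $D_{coh}(\msc{R})^G$, apply Theorem~\ref{thm:tv2} to get finiteness over $\mbb{H}^\ast(G,\msc{R})\cong H^\ast(G,A_Z)$, and bound the Krull dimension through $H^\ast(G,k)\ot (A_Z)^G$ exactly as the paper does. The only cosmetic deviations are that you invoke Theorem~\ref{thm:tv1} directly for the finite generation of $H^\ast(G,A_Z)$ where the paper routes this through Theorem~\ref{thm:tv2}, and that you leave as a stated check the multiplicative/module compatibility of the opening identification, which the paper verifies in its preliminary subsection via the projective generator $\O(G)\rtimes kG$ and injectivity of $\O(G)\ot V$ over $kG$.
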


One can recall our definition of a finite type tensor category, and of the Krull dimension of such a category, from the introduction.  For $\mcl{T}^\ast$ the cotangent space $T^\ast_1\Spf(Z)$, considered as a variety with a linear $G$-action, we show in particular that there is a finite map of schemes $\Spec \Ext^\ast_{Z(\rep(G))}(\1,\1)\to (G\setminus \mcl{T}^\ast)\times \Spec H^\ast(G,\1)$.

\subsection{Preliminaries for Theorem~\ref{thm:ZG}: Derived maps in $Z(\rep(G))$}

We let $G$ act on itself via the adjoint action, and have $\Coh(G)^G=\rep(\O(G))^G$.  The unit object $\1\in \Coh(G)^G$ is the residue field of the fixed point $1:\Spec(k)\to G$.  We have
\[
\REnd_{\Coh(G)^G}(\1)=\REnd_{\Coh(G)}(\1)^{\RG},
\]
as an algebra, and for any $V$ in $\Coh(G)^G$ we have
\[
\RHom_{\Coh(G)^G}(\1,V)=\RHom_{\Coh(G)}(\1,V)^{\RG},
\]
as a dg $\REnd_{\Coh(G)^G}(\1)$-module.
\par

One can observe these identifications essentially directly, by noting that for the projective generator $\O(G)\rtimes kG$ we have an identification of $G$-representations
\[
\Hom_{\Coh(G)}(\O(G)\rtimes kG,V)=\Hom_k(kG,V)=\O(G)\ot V,
\]
and $\O(G)\ot V$ is an injective over $kG$ for any $V$.  Hence the functor $\Hom_{\Coh(G)}(-,V)$ sends projectives objects in $\Coh(G)^G$ to injectives in $\Rep(G)$, and for a projective resolution $F\to \1$ we have identifications in the derived category of vector spaces
\[
\begin{array}{rl}
\RHom_{\Coh(G)^G}(\1,V)&=\Hom_{\Coh(G)^G}(F,V)\\
&=\Hom_{\Coh(G)}(F,V)^G\\
&\cong\Hom_{\Coh(G)}(F,V)^{\RG}=\RHom_{\Coh(G)}(\1,V)^{\RG}
\end{array}
\]
and
\[
\REnd_{\Coh(G)^G}(\1,\1)=\End_{\Coh(G)}(F)^G\cong\End_{\Coh(G)}(F)^{\RG}=\REnd_{\Coh(G)}(\1)^{\RG}.
\]
The middle identification for derived endomorphisms comes from the diagram
\[
\xymatrix{
\End(F)^G\ar[r]\ar[d]_\sim & \End(F)^{\RG}\ar[d]^\sim\\
\Hom(F,\1)^G\ar[r]_\sim & \Hom(F,\1)^{\RG}.
}
\]

\subsection{Proof of Theorem~\ref{thm:ZG}}

\begin{proof}
Fix an embedding $G\to \mcl{H}$ and associated equivariant deformation $\O\to \O(G)$ as above, and take $A=A_Z=\Sym(\Sigma^{-2}(m_Z/m_Z^2)^\ast)$, as in \eqref{eq:a_z}.  Take also $\msc{R}$ the dg $G$-algebra $\REnd_{\mcl{K}_Z\ot_Z\mcl{K}_Z}(\mcl{K}_Z)$.  We recall from Lemma~\ref{lem:formal} that $\msc{R}$ is equivariantly formal, and so homotopy isomorphic to $A$.  We adopt the abbreviated notations $\RHom=\RHom_{\Coh(G)}$ and $\REnd=\REnd_{\Coh(G)}$ when convenient.
\par

We consider the equivariant dg algebra map
\[
\mfk{def}^G_\1:\msc{R}\to \REnd_{\Coh(G)}(\1)
\]
of Theorem~\ref{thm:equiv_fg}, and the action of $\msc{R}$ on each $\REnd_{\Coh(G)}(\1,V)$ through $\mfk{def}^G_\1$.  By Theorems \ref{thm:equiv_fg} and \ref{thm:tv2}, the hypercohomology $\mbb{H}^\ast(G,\REnd(\1))$ is a finite algebra extension of $\mbb{H}^\ast(G,\msc{R})$, and $\mbb{H}^\ast(G,\RHom(\1,V))$ is a finitely generated module over $\mbb{H}^\ast(G,\msc{R})$ for any $V$ in $\Coh(G)^G$.  In particular, $\mbb{H}^\ast(G,\RHom(\1,V))$ is finite over $\mbb{H}^\ast(G,\REnd(\1))$.
\par

Since $\mbb{H}^\ast(G,\msc{R})\cong \mbb{H}^\ast(G,A)$ is of finite type over $k$, by Touz\'{e}-Van der Kallen (Theorem~\ref{thm:tv2}), the above arguments imply that
\[
\mbb{H}^\ast(G,\REnd_{\Coh(G)}(\1))=\Ext^\ast_{\Coh(G)^G}(\1,\1)
\]
is a finite type $k$-algebra, and that each
\[
\mbb{H}^\ast(G,\RHom_{\Coh(G)}(\1,V))=\Ext^\ast_{\Coh(G)^G}(\1,V)
\]
is a finitely generated module over this algebra, for $V$ in $\Coh(G)^G$.  That is to say, the tensor category $Z(\rep(G))\cong\Coh(G)^G$ is of finite type over $k$.
\par

As for the Krull dimension, $\mbb{H}^\ast(G,A)$ is finite over $H^\ast(G,A^G)=H^\ast(G,\1)\ot A^G$, by Touz\'{e}-Van der Kallen, so that
\[
\begin{array}{rl}
\Kdim Z(\rep(G))&=\Kdim \Ext^\ast_{Z(\rep(G))}(\1,\1)\\
&\leq \Kdim H^\ast(G,k)\ot A^G\\
&\hspace{.5cm}=\Kdim H^\ast(G,k)\ot A\\
&\hspace{.5cm}=\Kdim \rep(G)+\dim \mcl{H}/G=\Kdim\rep(G)+\dim\mcl{H}.
\end{array}
\]
When $\mcl{H}$ is taken to be of minimal possible dimension we find the proposed bound,
\[
\Kdim(Z(\rep(G)))\leq \Kdim\rep(G)+\operatorname{embed.dim}(G).
\]
\end{proof}

\bibliographystyle{abbrv}

\end{document}